\documentclass[11pt,letterpaper]{amsart}

\usepackage{graphicx}
\usepackage{amssymb}
\usepackage{amsthm}
\usepackage{amsmath}
\usepackage{stmaryrd}
\usepackage{cite}
\usepackage[mathscr]{eucal}
\usepackage{hyperref}
\usepackage{comment}
\usepackage{caption}
\usepackage{subcaption}
\usepackage{wrapfig}
\usepackage[permil]{overpic}
\usepackage{url}
\usepackage{hyperref}
\usepackage{placeins}

\title[An $SO(3)\times SO(8)$-invariant Einstein metric on $S^3\times S^7$]{An $SO(3)\times SO(8)$-invariant Einstein metric on $S^3\times S^7$}
\author[Huang]{Yuming Huang}
\address{Dpartment of Mathematics, University of California, Santa Barbara, South Hall, Room 6432P, Santa Barbara, CA 93106, USA}
\email{yuming\_huang@ucsb.edu}

\keywords{Einstein metrics, product of spheres, cohomogeneity one}

\makeatletter
\@namedef{subjclassname@2026}{
\textup{2020} Mathematics Subject Classification}
\makeatother

\subjclass[2026]{53C25 (53C15, 53C20)}

\begin{document}
\newcommand{\Ext}{\bigwedge\nolimits}
\newcommand{\Div}{\operatorname{div}}
\newcommand{\Hol} {\operatorname{Hol}}
\newcommand{\diam} {\operatorname{diam}}
\newcommand{\Scal} {\operatorname{Scal}}
\newcommand{\scal} {\operatorname{scal}}
\newcommand{\Ric} {\operatorname{Ric}}
\newcommand{\Hess} {\operatorname{Hess}}
\newcommand{\grad} {\operatorname{grad}}
\newcommand{\Sect} {\operatorname{Sect}}
\newcommand{\Rm} {\operatorname{Rm}}
\newcommand{ \Rmzero } {\mathring{\Rm}}
\newcommand{\Rc} {\operatorname{Rc}}
\newcommand{\Curv} {S_{B}^{2}\left( \mathfrak{so}(n) \right) }
\newcommand{ \tr } {\operatorname{tr}}
\newcommand{ \id } {\operatorname{id}}
\newcommand{ \Riczero } {\mathring{\Ric}}
\newcommand{ \ad } {\operatorname{ad}}
\newcommand{ \Ad } {\operatorname{Ad}}
\newcommand{ \dist } {\operatorname{dist}}
\newcommand{ \rank } {\operatorname{rank}}
\newcommand{\Vol}{\operatorname{Vol}}
\newcommand{\dVol}{\operatorname{dVol}}
\newcommand{ \zitieren }[1]{ \hspace{-3mm} \cite{#1}}
\newcommand{ \pr }{\operatorname{pr}}
\newcommand{\diag}{\operatorname{diag}}
\newcommand{\Lagr}{\mathcal{L}}
\newcommand{\av}{\operatorname{av}}
\newcommand{ \floor }[1]{ \lfloor #1 \rfloor }
\newcommand{ \ceil }[1]{ \lceil #1 \rceil }
\newcommand{\Sym} {\operatorname{Sym}}
\newcommand{\bcirc}{ \ \bar{\circ} \ }
\newcommand{\conj}[1]{ \overline{ #1 } }
\newcommand{\sign}[1]{\operatorname{sign}(#1)}
\newcommand{\cone}{\operatorname{cone}}
\newcommand{\pbd}{\varphi_{bar}^{\delta}}

\newtheorem{theorem}{Theorem}[section]
\newtheorem{definition}[theorem]{Definition}
\newtheorem{example}[theorem]{Example}
\newtheorem{remark}[theorem]{Remark}
\newtheorem{lemma}[theorem]{Lemma}
\newtheorem{proposition}[theorem]{Proposition}
\newtheorem{corollary}[theorem]{Corollary}
\newtheorem{assumption}[theorem]{Assumption}
\newtheorem{acknowledgment}[theorem]{Acknowledgment}
\newtheorem{DefAndLemma}[theorem]{Definition and lemma}

\newcommand{\R}{\mathbb{R}}
\newcommand{\N}{\mathbb{N}}
\newcommand{\Z}{\mathbb{Z}}
\newcommand{\Q}{\mathbb{Q}}
\newcommand{\C}{\mathbb{C}}
\newcommand{\F}{\mathbb{F}}
\newcommand{\X}{\mathcal{X}}
\newcommand{\D}{\mathcal{D}}
\newcommand{\Cont}{\mathcal{C}}

\renewcommand{\labelenumi}{(\alph{enumi})}
\newtheorem{maintheorem}{Theorem}[]
\renewcommand*{\themaintheorem}{\Alph{maintheorem}}
\newtheorem*{theorem*}{Theorem}
\newtheorem*{corollary*}{Corollary}
\newtheorem*{remark*}{Remark}
\newtheorem*{example*}{Example}
\newtheorem*{question*}{Question}
\newtheorem*{definition*}{Definition}
\newtheorem{conjecture}[maintheorem]{Conjecture}
\newtheorem*{conjecture*}{Conjecture}

\begin{abstract}
In this paper, we prove the existence of an $SO(3)\times SO(8)$-invariant Einstein metric with positive scalar curvature on $S^{3}\times S^7$.
\end{abstract}

\maketitle

\section*{Introduction}

A Riemannian manifold $(M,g)$ is an Einstein manifold if its Ricci tensor satisfies $\Ric (g) = \lambda g$ for some $\lambda \in \R.$ There are many Einstein metrics on $S^3\times S^7$ that arise as product metrics. For example, with the proper scaling, one can take the product of the round metric on $S^3$ with an Einstein metric on $S^7$, such as the round metric on $S^7$, Jensen's homogeneous Einstein metric in \cite{JensenEinsteinMetricsOnPrincipalFibreBundles},  B\"ohm's cohomogenity one Einstein metrics in \cite{BohmInhomEinstein}, or Sasaki-Einstein metrics in \cite{BoyerGalickiKollarEinsteinMetricsOnSpheres, GhigiKollarKaehlerEinsteinOrbifolds, LiuSanoTasinIninitelySasakiEinsteinMetricsSpheres}. For a product manifold $M_1\times M_2$, we call a metric $g$ on $M_1\times M_2$ non-standard if it does not arise as a product metric. Non-standard $SO(d_1+1)\times SO(d_2+1)$-invariant Einstein metrics on $S^{d_1+1}\times S^{d_2}$ were first found by B\"ohm in \cite{BohmInhomEinstein} for $ d_1+d_2\leq 8$ where $d_1,d_2\geq 2$. In \cite{FoscoloHaskinsNearlyKaehler}, Foscolo-Haskins discovered a non-standard $SU(2)\times SU(2)$-invariant nearly K\"ahler Einstein metric on $S^3\times S^3$. In this paper, we prove the following theorem:

\begin{maintheorem}
\label{EinsteinMetricsOnProductSphereMainTheorem}
There exists a non-standard $SO(3)\times SO(8)$-invariant Einstein metric with positive scalar curvature on the product $S^3\times S^7$.\end{maintheorem}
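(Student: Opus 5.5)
We view $S^3\times S^7$ as a cohomogeneity one manifold under $G=SO(3)\times SO(8)$, where $SO(3)$ acts on $S^3\subset\R^4=\R\oplus\R^3$ fixing the $\R$-factor and $SO(8)$ acts on $S^7$ transitively. The principal orbits are $S^2\times S^7=G/(SO(2)\times SO(7))$, and the two singular orbits (at the poles of $S^3$) are $\{pt\}\times S^7 = G/(SO(3)\times SO(7))$. On the regular part $(0,T)\times S^2\times S^7$ an invariant metric has the form $g=dt^2+f(t)^2 g_{S^2}+h(t)^2 g_{S^7}$; the isotropy representation of $SO(2)\times SO(7)$ on $\R^2\oplus\R^7$ is multiplicity free, so this is the general diagonal form. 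The Einstein condition $\Ric=\lambda g$, normalized to $\lambda=9$, becomes the ODE system
\begin{align*}
-2\frac{f''}{f}-7\frac{h''}{h}&=\lambda,\\
-\frac{f''}{f}+\frac{1-f'^2}{f^2}-7\frac{f'h'}{fh}&=\lambda,\\
-\frac{h''}{h}+6\frac{1-h'^2}{h^2}-2\frac{f'h'}{fh}&=\lambda,
\end{align*}
together with the conservation law (Hamiltonian constraint) obtained from the Gauss equation. Smoothness at the singular orbit $t=0$ requires $f(0)=0$, $f'(0)=1$, $h(0)=\bar h>0$, $h'(0)=0$, and $f$ odd, $h$ even. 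By the standard initial value theorem for cohomogeneity one Einstein equations (Eschenburg--Wang), for each $\bar h>0$ there is a unique local solution $(f_{\bar h},h_{\bar h})$, depending continuously on $\bar h$. A global smooth metric on $S^3\times S^7$ arises precisely when a solution closes up at some $T>0$ with the symmetric conditions $f(T)=0$, $f'(T)=-1$, $h'(T)=0$. By the reflection symmetry $t\mapsto T-t$, it suffices to find $\bar h$ and a time $t_0$ at which $f'(t_0)=0$ and $h'(t_0)=0$ simultaneously; reflecting the solution then gives a doubled metric. This reduces the theorem to a one-parameter shooting problem.

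The plan is a B\"ohm-type counting argument. For the product metric, $h\equiv \bar h_{\ast}$ is constant (with $6/\bar h_\ast^2=\lambda$) and $f=\sin(t\sqrt{\lambda/2})/\sqrt{\lambda/2}$, so that $f'$ and $h'$ vanish together at the midpoint. We track a rotation or crossing number, for instance the number of zeros of $h'$ (equivalently of $h-\bar h_\ast$ relative to the product solution) before the first zero of $f'$, or the winding of the curve $(f'/f,h'/h)$ in a suitably rescaled phase plane. We then study the two degenerations. As $\bar h\to 0$, the rescaled solution converges to the Ricci-flat cone-type solution on $\R^3\times S^7$ (the $\lambda=0$ limit). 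Its linearization around the cone over $S^2\times S^7$ with its Einstein metric has complex characteristic exponents exactly when the dimension condition $d_1+d_2\le 8$ analysis of B\"ohm fails. Here the principal orbit has dimension $9>8$, so this route needs modification, and instead we use the limit $\bar h\to\infty$ or a different rescaling to locate oscillation. As $\bar h\to\infty$, the solution should converge to the $\R^3$-type collapse, giving a different crossing count. Comparing the crossing numbers at the two ends and at the product solution, a jump in the intersection count between two parameter values not equal to $\bar h_\ast$ forces, by continuity and an intermediate value argument, a parameter $\bar h_1\neq \bar h_\ast$ where $f'$ and $h'$ vanish simultaneously. This gives the non-standard metric.

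The main obstacle is exactly that B\"ohm's oscillation mechanism (complex eigenvalues of the linearized cone equation) fails in this dimension, so the counting cannot rely on infinitely many crossings near $\bar h=0$. The first approach I would try is to compute the linearization of the Einstein ODE at the product solution as $\bar h\to\bar h_\ast$. Its crossing number is governed by the Jacobi-type equation $\phi''+\frac{2f'}{f}\phi'+\mu\phi=0$ on the hemisphere, and an eigenvalue comparison shows whether the product solution is a bifurcation point or whether the count differs from the count at $\bar h\to\infty$. If the count at $\bar h_\ast^{\pm}$ differs from the count at one of the ends, a non-product solution must exist. This step may require rigorous estimates, possibly computer-assisted with interval arithmetic, since the relevant inequality is likely to be narrow. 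Positivity of the scalar curvature is then automatic from $\lambda>0$.
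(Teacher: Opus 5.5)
Your reduction is sound and matches the paper's: $f'(t_0)=h'(t_0)=0$ is exactly the condition that a trajectory in $M_1^+$ meets $\{H=0\}\cap\{\Delta=0\}$. Tracking the winding of $(Z,\Delta)$ at $H=0$ across the one-parameter family is also the paper's device. The gap is that you never supply the estimate that forces a second zero, and you discard the degeneration where that estimate lives. You are right that for $d_1+d_2=9$ the $\{H=1\}$ linearization at $\cone^+$ has the real double eigenvalue $-\frac49$, so B\"ohm's infinite spiralling is unavailable. But infinite winding is not needed. At the $\bar h\to\infty$ end ($\tau=-1$ in the paper) the angle at $H=0$ lies in $(0,\frac{\pi}{2})$, so it suffices to show that near the $\bar h\to 0$ end it exceeds $2\pi$. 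The intermediate value theorem then gives crossings of $\{\Delta=0\}$ at angles $\pi$ and $2\pi$: two distinct symmetric solutions, at most one of which is the product.

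The winding accrues in two stages. First, along B\"ohm's Ricci-flat trajectory $\gamma^{RF}_1$ in $\{H=1\}$. Second, along the cone solution $(0,0,h)$ as $h$ decreases from $1$ to $0$, where the linearization $\left(\begin{smallmatrix}0 & -16/81\\ 1 & -8h/9\end{smallmatrix}\right)$ has complex eigenvalues for $|h|<1$. The decisive, dimension-specific fact is that for $(d_1,d_2)=(2,7)$ the trajectory $\gamma^{RF}_1$ converges to $\cone^+$ at angle $\arctan\frac94+\pi$ rather than $\arctan\frac94$, i.e.\ it makes an extra half turn. The paper proves this with an explicit forward-invariant region bounded by the lines $\Delta=\frac95(Z-\frac14)+\frac12$, $\Delta=\frac{23}{10}Z$ and $\Delta=\frac94Z$. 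This needs only sign checks of explicit polynomials, not computer assistance. The Nienhaus--Wink angle barrier, shifted by $\pi$, then carries nearby trajectories to angle at least $\frac52\pi$ at $H=0$. Your suggested substitute, looking for oscillation as $\bar h\to\infty$, points at the wrong end: that limit produces an angle in $(0,\frac{\pi}{2})$ at $H=0$, i.e.\ essentially no winding.

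Your fallback of linearizing at the product cannot close the gap on its own. Write $h=\bar h_\ast(1+\epsilon u)$. The equation in the $S^7$ directions linearizes to $\ddot u+2\frac{\dot f}{f}\dot u+2\lambda u=0$, the zonal eigenvalue equation for the Laplacian of $S^3$ with eigenvalue $2\lambda$, so your $\mu$ is $2\lambda$. The zonal eigenvalues of the round $S^3$ with $\Ric=\lambda$ are $\frac{k(k+2)}{2}\lambda$, and $2\lambda$ is not among them. A regular solution with $\dot u=0$ at the equator would extend evenly to such an eigenfunction, so none exists. Hence the product is a nondegenerate, isolated zero of your shooting function, not a bifurcation point. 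A local computation therefore gives at most the sign of the crossing at the product. Forcing another zero still requires the crossing count at an end of the parameter range, which your proposal asserts (``should converge \ldots giving a different crossing count'') but never estimates.
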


More generally, our construction of a non-standard Einstein metric with positive scalar curvature works for doubly warped product metrics on $S^3\times M^7$, where $(M^7,g_E)$ is any complete Einstein $7$-manifold with positive scalar curvature.
 
Our Einstein metric on $S^3\times S^7$ is related to the non-round $SO(d_1+1)\times SO(d_2+1)$-invariant Einstein metrics on $S^{10}$ found by Nienhaus-Wink in \cite{EinsteinMetricsOnTheTenSpheres}. The relationship between Einstein metrics on products and Einstein metrics on spheres is actually quite common. The B\"ohm \cite{BohmInhomEinstein} and Foscolo-Haskins \cite{FoscoloHaskinsNearlyKaehler} Einstein metrics on products of spheres were all found during successful searches for non-round Einstein metrics on spheres.  In addition, Chi discovered a non-round $Sp(2)\times Sp(1)$-invariant Einstein metric on $S^8$ \cite{ChiPositiveEinsteinMetrics}. Also, using computer-assisted methods, Buttsworth-Hodgkinson proved in \cite{BHNumericalEinsteinMetricOnTwelveSphere} that there exists a non-round $SO(3)\times SO(10)$-invariant Einstein metric on $S^{12}$. In \cite{QiuShiWangComputerAssistedEinsteinMetrics}, using computer-assisted methods, Wang recovered known Einstein metrics on $S^{10}$ \cite{EinsteinMetricsOnTheTenSpheres} and $S^{12}$ \cite{BHNumericalEinsteinMetricOnTwelveSphere}, and proved the existence of a non-standard Einstein metric on $S^3\times S^7$ and non-round Einstein metrics on $S^{11}$, $S^{12}$ and $S^{13}$.
 
For our proof, we build up on the setup in \cite{EinsteinMetricsOnTheTenSpheres}. In \cite{EinsteinMetricsOnTheTenSpheres}, Nienhaus-Wink constructed $SO(d_1+1)\times SO(d_2+1)$-invariant Einstein metrics on $S^{10}$ by estimating the winding around the cone solution, i.e. the sine cone over the principal orbit $S^{d_1}\times S^{d_2}$. As the volume of the singular orbit collapses, solutions approach the cone solution along the trajectory corresponding to B\"ohm's Ricci flat metric on $\R^{d_1+1}\times S^{d_2}$ \cite{BohmNonCompactComhomOneEinstein}. We denote B\"ohm's Ricci flat metric as $\gamma^{RF}_1$, and specify the dimensions $d_1,d_2$ whenever needed. The crucial finding in this paper is that the winding of $\gamma^{RF}_1$ around the cone solution when $(d_1,d_2)=(2,7)$ is significantly larger than the winding of $\gamma^{RF}_1$ when $d_1+d_2=9$ and $(d_1,d_2)\not=(2,7)$. This allows us to detect a non-standard Einstein metric on $S^{3}\times S^{7}$ but not for other configurations, cf. Figure \ref{fig:fig1} and Figure \ref{fig:fig2} . We do clarify here that no rigorous statements are made on the winding angle of the cases $(d_1,d_2)\not=(2,7)$, as we only see numerically that the winding of $\gamma^{RF}_1$ in those cases is not sufficient to generate non-standard metrics on $S^{d_1+1}\times S^{d_2}$.

To find the $S^{10}$ metrics in \cite{EinsteinMetricsOnTheTenSpheres}, it suffices to have the winding angle of $\gamma^{RF}_1$ being at least $\arctan{\frac{9}{4}}$. In section 3 of this paper, we construct a forwardly preserved set to show that the winding angle of $\gamma^{RF}_1$ is actually $\arctan{\frac{9}{4}}+\pi$ for $(d_1,d_2)=(2,7)$. 

\begin{figure}[h]
    
    \centering
    \includegraphics[scale=0.21]{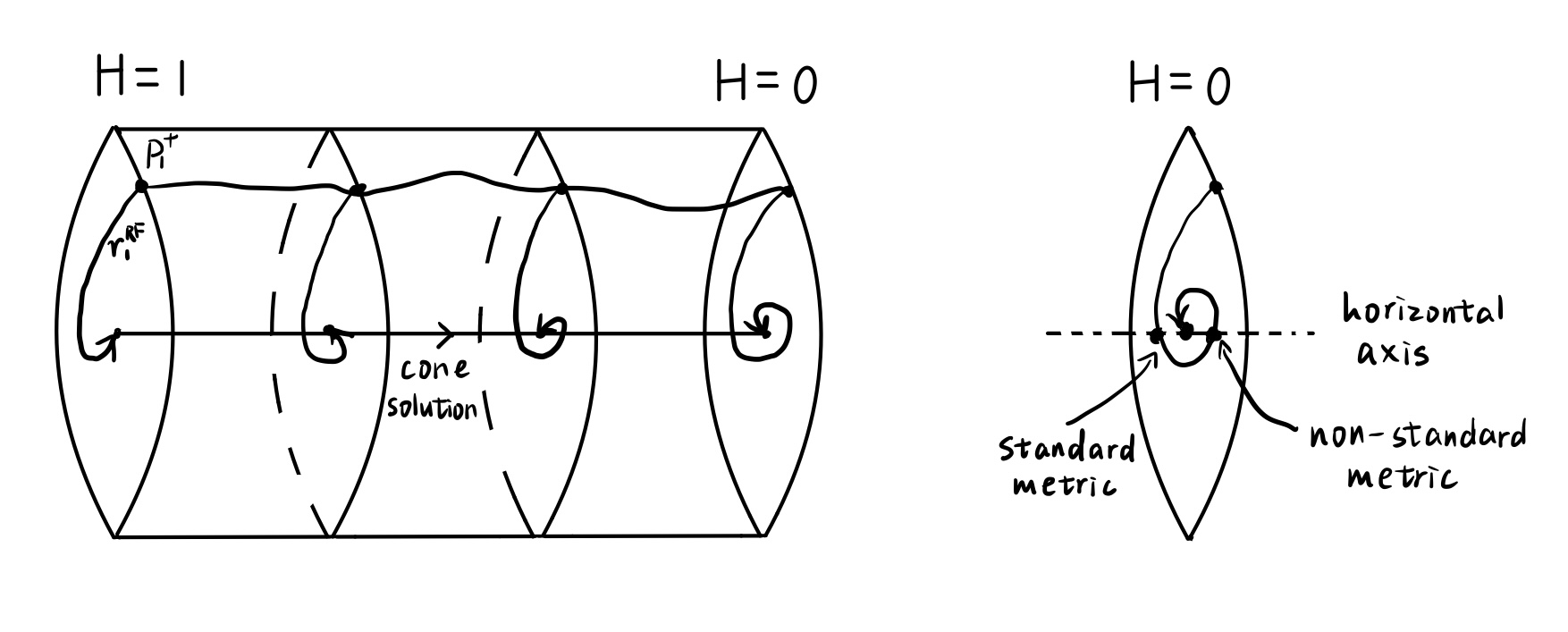}
    \caption{The winding in the case $(d_1,d_2)=(2,7)$.}
    \label{fig:fig1}
\end{figure}

\begin{figure}[h]
    
    \centering
    \includegraphics[scale=0.20]{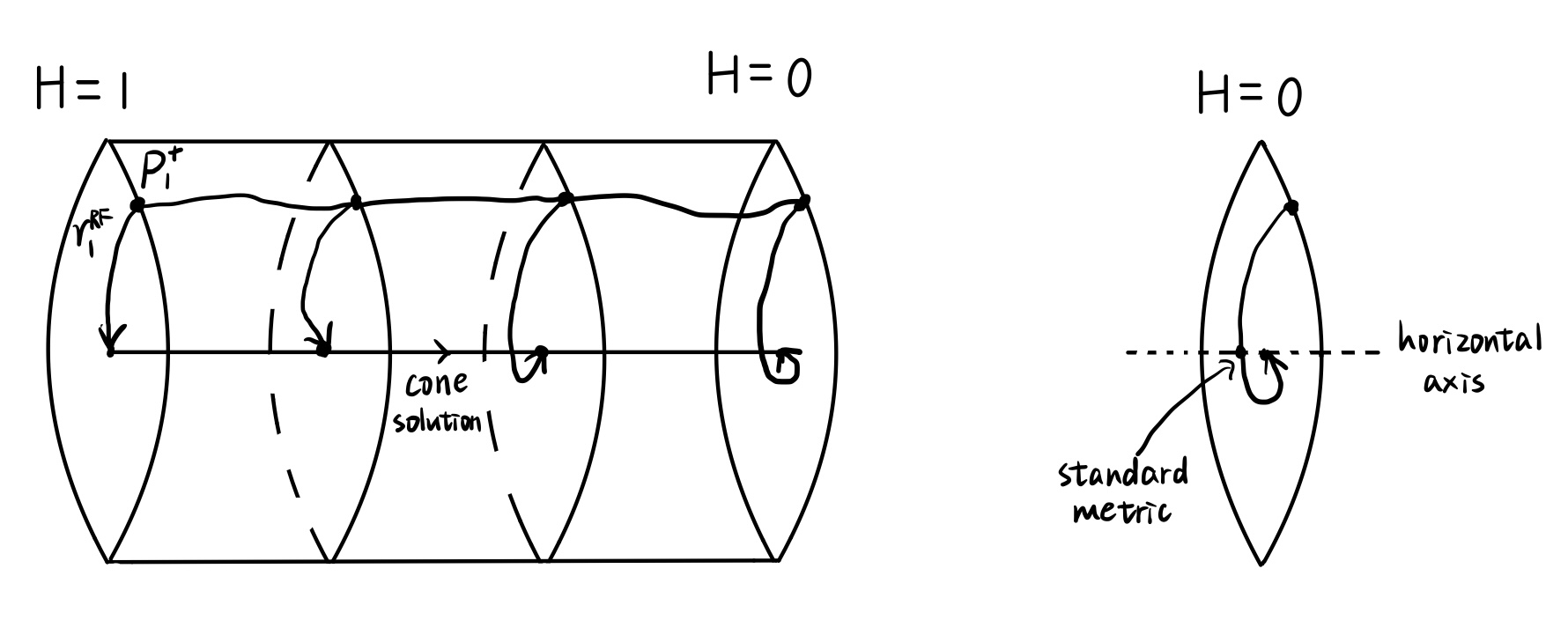}
    \caption{The winding in the cases $d_1+d_2=9$ with $(d_1,d_2)\not=(2,7)$.}
    \label{fig:fig2}
\end{figure}

\vspace{2mm}
\newpage
\textit{Structure.} In the preliminary section, we review the basic equations of cohomogenity one Einstein metrics on $S^{d_1+1}\times S^{d_2}$ that are invariant under the standard action of $SO(d_1+1)\times SO(d_2+1)$. In section 2, we set up the regular coordinates and prove some basic results, including a counting lemma for our Einstein metrics. In section 3, we prove that for $(d_1,d_2)=(2,7)$ B\"ohm's Ricci flat trajectory $\gamma^{RF}_1$ approaches the base point of the cone solution at the angle $\arctan{\frac{9}{4}}+\pi$. In section 4, we study the angle ODE along the cone solution and establish a useful barrier angle function. Finally in section 5, we prove Theorem \ref{EinsteinMetricsOnProductSphereMainTheorem}. \vspace{2mm}

\textit{Acknowledgments.} The author thanks his advisor Matthias Wink for pointing out that a non-standard Einstein metric on $S^3\times S^7$ was numerically detected during his joint work with Jan Nienhaus in \cite{EinsteinMetricsOnTheTenSpheres}. The author would also like to thank Patrick Donovan, Jia-Lin Hsu and Qiu Shi Wang for comments and suggestions.

\section{Preliminaries}
\label{SectionPreliminaries}
\subsection{Einstein metrics on the product of spheres} 
\label{SectionEinsteinMetricsOnSpheresIntro}
Let $(M,g)$ be an $(n+1)$-dimensional Riemannian manifold. $(M,g)$ is called Einstein if
\begin{equation*}
    \Ric = \lambda g.
\end{equation*} We consider $SO(d_1+1) \times SO(d_2+1)$-invariant Einstein metrics on the product of spheres $S^{d_1+1}\times S^{d_2}$, for $d_1, d_2 \geq 2$. This setup was considered by B\"ohm in \cite{BohmInhomEinstein} for $n=d_1+d_2\leq 8$ and by Nienhaus-Wink in \cite{EinsteinMetricsOnTheTenSpheres} for $n=d_1+d_2=9$. Away from the singular orbits, the metric is given by
\begin{align*}
    dt^2+g_t=dt^2+f_1^2(t) \ g_{S^{d_1}} + f_2^2(t) \ g_{S^{d_2}},
\end{align*}
where $g_{S^{d_i}}$ denotes the round metric on $S^{d_i}$. The shape operator and the $(1,1)$-Ricci curvature of the principal orbit $S^{d_1}\times S^{d_2}$ are
\begin{align*}
    L = \left( \frac{\Dot{f}_1}{f_1} \id_{d_1}, \frac{\Dot{f}_2}{f_2} \id_{d_2} \right), \ \ 
    r= \left( \frac{d_1 -1}{f_1^2} \id_{d_1}, \frac{d_2 -1}{f_2^2} \id_{d_2} \right).
\end{align*}
Eschenburg-Wang \cite{EschenburgWangInitialValueProblem} have proven that 
\begin{align}
\label{EvolutionMetric}
    \Dot{g}_t = 2 g_t L_t
\end{align}
and the Einstein equations are given by
\begin{align}
    \label{DerivativeL}
    - \Dot{L} -  \tr( L ) L + r & = \lambda \id, \\
    \label{DerivativeTrL}
    -\tr( \Dot{L} ) - \tr( L^2 ) & = \lambda, \\
    \Ric(X,N) & = 0, \nonumber
\end{align}
where $X$ is tangent to the principal orbit and $N$ is a horizontal lift of $\frac{\partial}{\partial t}$. Furthermore, any solution to \eqref{DerivativeL}, \eqref{DerivativeTrL} satisfies the constraint equation
\begin{align}
\label{GeneralConstraintEquation}
    \tr ( L^2 ) + \tr (r) - \tr(L)^2 = (n-1) \lambda.
\end{align}
In \cite[Theorem 2.3]{BohmInhomEinstein}, B\"ohm proved that for any $a>0$, there is a unique solution $(f_1, \Dot{f}_1, f_2, \Dot{f}_2)$ to \eqref{EvolutionMetric} -\eqref{DerivativeTrL} satisfying the initial condition
\begin{align}
\label{InitialSmoothCollapseSDOneFcoords}
    f_1(0)=0, \ \Dot{f}_1(0)=1, \ f_2(0)=a, \ \Dot{f}_2(0)=0,
\end{align} corresponding to the smooth collapse of $S^{d_1}$ at $t=0$. If the solution in addition satisfies the terminal condition  
\begin{align}
\label{TerminalSmoothCollapseSDOneFcoords}
    f_1(T)=0, \ \Dot{f}_1(T)=-1, \ f_2(T)=b, \ \Dot{f}_2(T)=0
\end{align}
for some $T>0$ and $b>0$, then $S^{d_1}$ smoothly collapses at $t=T$ and it induces a smooth complete Einstein metric on $S^{d_1+1}\times S^{d_2}$.

\section{Coordinate change}
\label{SectionCoordinateChanges}
We use the same coordinates as in \cite{EinsteinMetricsOnTheTenSpheres}. To avoid redundancy, we establish the coordinates in a rigorous but stream-lined manner. We refer curious readers to \cite[Section 2]{EinsteinMetricsOnTheTenSpheres} for a more geometric perspective of the coordinates. 

In section \ref{SectionZDeltaHCoordinates}, we establish the equivalent $(Z,\Delta,H)$-coordinates and prove some of its basic properties. In section \ref{SymmetryAndCounting}, we establish a counting lemma for complete $SO(d_1+1)\times SO(d_2+1)$-invariant Einstein metrics on $S^{d_1+1}\times S^{d_2}$.

\subsection{$(Z,\Delta,H)$-coordinates} 
\label{SectionZDeltaHCoordinates}
We fix $\lambda>0$ throughout the rest of this paper. Following \cite{ChiPositiveEinsteinMetrics}, we define 
\begin{align*}
    \mathcal{L}  = \frac{1}{\sqrt{ \tr(L)^2+n \lambda}}, \ \
    \frac{d}{ds}  = \mathcal{L} \ \frac{d}{dt}.
\end{align*}
The derivative with respect to $s$ will be denoted by prime $'$. Now, we define the $(Z,\Delta,H)$-coordinates by
\begin{align*}Z=(d_1-1)\frac{\mathcal{L}^2}{f_1^2}-(d_2-1)\frac{\mathcal{L}^2}{f_2^2}\ , \ \Delta=\mathcal{L}\frac{\Dot{f}_1}{f_1}-\mathcal{L}\frac{\Dot{f}_2}{f_2}\ , \ H=\mathcal{L} \ \tr(L).
\end{align*}
It follows from \eqref{DerivativeL} and \eqref{DerivativeTrL} that
\begin{align}
    \label{EinsteinODEinZDeltaH}
     Z^{'} & = \frac{2}{n} \Delta \left( d_1 d_2 Z \Delta H + \frac{d_1 d_2}{n} \Delta^2 - \frac{n-1}{n} + (d_1 -d_2) Z \right), \\
    \nonumber
    \Delta^{'} & = \Delta H \left( \frac{d_1 d_2}{n} \Delta^2 - \frac{n-1}{n} \right) +Z, \\
    \nonumber
    H^{'} & = - \frac{1-H^2}{n} \left( d_1 d_2 \Delta^2 +1 \right).
\end{align}

Same as in \cite{EinsteinMetricsOnTheTenSpheres}, we consider the set
\begin{align*}
   \mathcal{S} = \left\lbrace d_1 Z + \frac{d_1 d_2}{n} \Delta^2 \leq \frac{n-1}{n} \right\rbrace \cap \left\lbrace -d_2 Z + \frac{d_1 d_2}{n} \Delta^2 \leq \frac{n-1}{n} \right\rbrace \cap \left\lbrace H^2 \le 1 \right\rbrace.
\end{align*}
The set $\mathcal{S}$ is homeomorphic to the cylinder $\overline{D}_1(0)\times [-1,1]$. We shall see in a moment that solutions to the Einstein ODEs \eqref{EvolutionMetric}-\eqref{DerivativeTrL} can be recovered from solutions to the ODE system \eqref{EinsteinODEinZDeltaH} restricted to $\mathcal{S}$. First, note that we have some useful invariant sets.

\begin{remark}[cf. Remark 2.4 in \cite{EinsteinMetricsOnTheTenSpheres}]
\normalfont
    \label{SPreserved}
    The set $\mathcal{S}$ is compact. Its boundary and its interior are all invariant the flow of the ODE system \eqref{EinsteinODEinZDeltaH}. All of the following sets are also invariant 
\begin{align*}
    \bigg\{ d_1 Z + \frac{d_1 d_2}{n} \Delta^2 = \frac{n-1}{n}\bigg\} \ &,\ \bigg\{ -d_2 Z +\frac{d_1 d_2}{n} \Delta^2 = \frac{n-1}{n}\bigg\},\\ 
    \{-1<H<1\}\ ,\
    \{H&=1\}\  \text{and} \ \{H=-1\}.
\end{align*}
Furthermore, solutions in $\mathcal{S}$ exist for all times $s\in\R$: the solutions of the ODE have short time existence as the ODE is given by polynomials, so when restricted to the compact invariant set $\mathcal{S}$ they have long time existence.
\end{remark}

We also observe that $H$ is monotonically decreasing in $\{-1<H<1\}$.

\begin{proposition}
    \label{MonotonicityOfH}
    Let $(Z(s),\Delta(s),H(s))$ be a solution to the ODE system \eqref{EinsteinODEinZDeltaH} restricted to $\mathcal{S}$ with $H(s_0)\in (-1,1)$ at some $s_0\in\R$. Then $H(s)$ is strictly monotonically decreasing, and we have $\lim_{s\rightarrow-\infty}H(s)=1$ and $\lim_{s\rightarrow+\infty}H(s)=-1$.
\end{proposition}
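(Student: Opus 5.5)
By Remark \ref{SPreserved}, the solution exists for all $s\in\R$, and the set $\{-1<H<1\}$ is invariant. Hence $H(s)\in(-1,1)$ for every $s\in\R$, not only at $s_0$. We then read off the sign of $H'$ from the third equation of \eqref{EinsteinODEinZDeltaH}:
\begin{align*}
H' = -\frac{1-H^2}{n}\left(d_1d_2\Delta^2+1\right).
\end{align*}
Both factors $1-H^2$ and $d_1d_2\Delta^2+1$ are strictly positive along the solution. So $H'<0$ for all $s$, and $H$ is strictly decreasing.

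Since $H$ is monotone and bounded in $[-1,1]$, the limits $H_{\pm}=\lim_{s\to\pm\infty}H(s)$ exist. They satisfy $H_-> H(s_0)> H_+$, so $H_-\in(-1,1]$ and $H_+\in[-1,1)$.

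We show $H_+=-1$ by contradiction; the case $H_-=1$ is symmetric. Suppose $H_+>-1$. Then for $s\geq s_0$ we have $H(s)\in[H_+,H(s_0)]\subset(-1,1)$, so $1-H^2\geq c>0$ for some constant $c$. Using $d_1d_2\Delta^2+1\geq 1$, this gives
\begin{align*}
H'(s)\leq -\frac{c}{n}\quad\text{for all } s\geq s_0.
\end{align*}
Integrating yields $H(s)\to-\infty$ as $s\to+\infty$. This contradicts $H\geq -1$ on $\mathcal{S}$, so $H_+=-1$.

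Likewise, if $H_-<1$, then $1-H^2\geq c>0$ on $(-\infty,s_0]$. Integrating backwards gives $H(s)\geq H(s_0)+\frac{c}{n}(s_0-s)\to+\infty$ as $s\to-\infty$, which contradicts $H\le 1$. Hence $H_-=1$.

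There is no serious obstacle here. The only point needing care is the uniform lower bound on $1-H^2$ on the half-line, which comes from the limit being assumed different from $\mp1$ together with monotonicity. The lower bound $d_1d_2\Delta^2+1\geq1$ is what prevents $H'$ from degenerating.
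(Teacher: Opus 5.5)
Your proof is correct. The monotonicity step is the same as in the paper: invariance of $\{-1<H<1\}$ together with the sign of $H'$.

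For the limits you take a different, more elementary route. The paper argues by contradiction via limit sets. If $H\to h_\infty\in(-1,1)$, then the $\omega$-limit set is nonempty by compactness of $\mathcal{S}$ and lies in the slice $\{H=h_\infty\}$. On that slice $H'<0$, so the $\omega$-limit set cannot be invariant, which is a contradiction. The $\alpha$-limit set handles $s\to-\infty$ in the same way.

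You instead integrate a uniform bound $H'\le -c/n$ along a half-line. Your version is self-contained and buys several things:
\begin{itemize}
\item it needs neither nonemptiness nor invariance of limit sets;
\item it uses no information about $(Z,\Delta)$ beyond the trivial bound $d_1d_2\Delta^2+1\geq 1$;
\item it is quantitative: a trajectory spends time at most $2n(1-\eta)/(\eta(2-\eta))$ in $\{|H|\le 1-\eta\}$.
\end{itemize}
The paper's version is shorter and purely qualitative. It relies only on $H'<0$ on the open slab, compactness, and standard limit-set theory, which matches the dynamical-systems style of the rest of that framework.
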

\begin{proof}
    $H(s)$ being strictly monotonically decreasing follows immediately from the evolution equation for $H$ and the fact that the solution is contained in the invariant set $\{-1<H<1\}$.
    
    Next, we prove that $\lim_{s\rightarrow+\infty}H(s)=-1$. Assume for contradiction that's not the case. Then by boundedness and strict monotonicity of $H$, we must have  $\lim_{s\rightarrow+\infty}H(s)=h_\infty\in(-1,1)$ and therefore $H\equiv h_{\infty}$ on the $\omega$-limit set. As $\mathcal{S}$ is compact, the $\omega$-limit set of our solution is nonempty. However, since $H'$ is negative on $H\equiv h_\infty\in(-1,1)$, the $\omega$-limit set is not invariant under the ODE. This contradicts the invariance of $\omega$-limit sets.

    The proof for $\lim_{s\rightarrow-\infty}H(s)=1$ follows similarly, but we look at the $\alpha$-limit set instead of the $\omega$-limit set.
\end{proof}

In the next proposition, we demonstrate that we can recover solutions to the Einstein ODE system \eqref{EvolutionMetric}-\eqref{DerivativeTrL} from solutions to the ODE system \eqref{EinsteinODEinZDeltaH}.

\begin{proposition}[cf. Proposition 2.1 in \cite{EinsteinMetricsOnTheTenSpheres}]
\label{EquivalenceOfODEs}
A solution $(Z, \Delta,H)$ to the ODE system \eqref{EinsteinODEinZDeltaH} with  $H \in (-1,1)$, $\frac{n-1}{n}-d_1Z-\frac{d_1d_2}{n}\Delta^2>0$ and $\frac{n-1}{n}+d_2Z-\frac{d_1d_2}{n}\Delta^2>0$ at some $s_0 \in \R$ induces a solution to the Einstein ODEs \eqref{EvolutionMetric} - \eqref{DerivativeTrL} by setting
\begin{align*}
     t=t(s_0) +& \int_{s_0}^s \sqrt{\frac{1 - H^2}{n \lambda}} ( \tau ) d \tau,\\ 
     \frac{\Dot{f}_1}{f_1} = \frac{d_2}{n}\sqrt{\frac{n\lambda}{1-H^2}}\bigg(\Delta+\frac{1}{d_2}H\bigg)\ &, \
     \frac{\Dot{f}_2}{f_2} = \frac{d_1}{n}\sqrt{\frac{n\lambda}{1-H^2}}\bigg(\frac{1}{d_1}H-\Delta\bigg), \\
     f_1=\sqrt{\frac{(d_1-1)(1 - H^2)}{\lambda\bigg(\frac{n-1}{n}+d_2Z-\frac{d_1d_2}{n}\Delta^2\bigg)}}\ &, \ f_2=\sqrt{\frac{(d_2-1)(1 - H^2)}{\lambda\bigg(\frac{n-1}{n}-d_1Z-\frac{d_1d_2}{n}\Delta^2\bigg)}}.
\end{align*}
\end{proposition}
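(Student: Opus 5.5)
The plan is to define $t$, $f_1$, $f_2$ by the stated formulas and check directly that they solve \eqref{EvolutionMetric}--\eqref{DerivativeTrL}. By Remark \ref{SPreserved}, the three open conditions assumed at $s_0$ persist for all $s$. Indeed, $\{-1<H<1\}$ is invariant, and the two boundary hypersurfaces $\{d_1Z+\frac{d_1d_2}{n}\Delta^2=\frac{n-1}{n}\}$ and $\{-d_2Z+\frac{d_1d_2}{n}\Delta^2=\frac{n-1}{n}\}$ are invariant. By uniqueness of ODE solutions, a trajectory starting off them can never reach them. Hence all square roots in the formulas are of strictly positive quantities for all $s\in\R$. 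Moreover $dt/ds=\sqrt{(1-H^2)/(n\lambda)}>0$, so $s\mapsto t$ is a diffeomorphism onto an interval and we may regard $Z,\Delta,H,f_1,f_2$ as functions of $t$.

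The first step is to verify consistency. Set $\mathcal{L}:=\sqrt{(1-H^2)/(n\lambda)}$, so that $d/ds=\mathcal{L}\,d/dt$, matching the paper's definition. Defining $L=\mathrm{diag}(\frac{\dot f_1}{f_1}\id_{d_1},\frac{\dot f_2}{f_2}\id_{d_2})$ from the given formulas, one checks
\begin{align*}
\mathcal{L}\tr(L)=\tfrac{1}{n}\bigl(d_1d_2\Delta+d_1H-d_1d_2\Delta+d_2H\bigr)=H
\end{align*}
and $\mathcal{L}(\frac{\dot f_1}{f_1}-\frac{\dot f_2}{f_2})=\Delta$. Together with $\mathcal{L}^2=(1-H^2)/(n\lambda)$ this gives $\mathcal{L}^{-2}=\tr(L)^2+n\lambda$, so $\mathcal{L}$ agrees with $(\tr(L)^2+n\lambda)^{-1/2}$. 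The genuine content is then the claim that the logarithmic derivative of the proposed $f_i$ with respect to $t$ equals the proposed $\dot f_i/f_i$. Differentiate $\log f_1=\frac12\log(1-H^2)-\frac12\log(\frac{n-1}{n}+d_2Z-\frac{d_1d_2}{n}\Delta^2)+\mathrm{const}$ in $s$ using \eqref{EinsteinODEinZDeltaH}. The goal is
\begin{align*}
(\log f_1)'=\tfrac{d_2}{n}\bigl(\Delta+\tfrac{1}{d_2}H\bigr)=\tfrac{1}{n}(d_2\Delta+H),
\end{align*}
and the analogous identity for $f_2$. This is the main computational step and the expected obstacle. In it, $(1-H^2)'/(1-H^2)=\frac{2H}{n}(d_1d_2\Delta^2+1)$, and the derivative of the quadratic $Q_1=\frac{n-1}{n}+d_2Z-\frac{d_1d_2}{n}\Delta^2$ must factor as $Q_1\cdot(\text{linear in }\Delta,H)$. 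I would expand $Q_1'=d_2Z'-\frac{2d_1d_2}{n}\Delta\Delta'$ and collect terms to exhibit $Q_1$ as a factor. This factorization is exactly why the boundary surfaces $\{Q_i=0\}$ are invariant, so it must hold.

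With the formulas for $f_i$ and $\dot f_i/f_i$ confirmed, I would verify the Einstein equations. Equation \eqref{EvolutionMetric} is automatic from $g_t=f_1^2g_{S^{d_1}}+f_2^2g_{S^{d_2}}$. Next, the formulas give
\begin{align*}
\frac{d_1-1}{f_1^2}=\frac{\lambda Q_1}{1-H^2}=\frac{Q_1}{n\mathcal{L}^2},
\end{align*}
and similarly for $f_2$. This yields $\mathcal{L}^2\bigl(\frac{d_1-1}{f_1^2}-\frac{d_2-1}{f_2^2}\bigr)=\frac{1}{n}(d_2Z+d_1Z)=Z$, so $Z$ is consistent with its definition. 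Finally, \eqref{DerivativeTrL} follows by writing $\tr(\dot L)=\frac{d}{dt}(H/\mathcal{L})$, converting to $s$, and inserting the $H'$ equation. The component form of \eqref{DerivativeL} follows from the $\Delta'$ equation, combined with \eqref{DerivativeTrL} and the constraint \eqref{GeneralConstraintEquation}; the latter holds identically here because $Q_1,Q_2$ encode $\tr(r)$ in terms of $Z,\Delta,H$. All of these are routine substitutions once the factorization step is done.
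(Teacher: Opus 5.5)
Your proposal is correct and takes the same route as the paper: the paper also gets well-definedness from the invariance statements in Remark~\ref{SPreserved} and then simply asserts the rest is a direct check, and your outline carries out that check. Its steps are the logarithmic-derivative identity for $f_i$, then \eqref{DerivativeTrL} from the $H'$ equation, then the two components of \eqref{DerivativeL} from the $\Delta'$ equation together with the identically satisfied constraint. One small slip: the cofactor in $Q_1'=Q_1\cdot(\cdots)$ is $\frac{2d_2}{n}\Delta(d_1H\Delta-1)$, which is not linear in $(\Delta,H)$; this cofactor is exactly what yields $(\log f_1)'=\frac{1}{n}(d_2\Delta+H)$, so the argument is unaffected.
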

\begin{proof}
    Assume that we are given a solution in $(Z,\Delta,H)$ with the specified properties. Note that all the above functions defined in terms of $(Z,\Delta,H)$ are well defined, as $-1<H<1$, $\frac{n-1}{n}-d_1Z-\frac{d_1d_2}{n}\Delta^2>0$ and $\frac{n-1}{n}+d_2Z-\frac{d_1d_2}{n}\Delta^2>0$ are all invariant under the ODE system \eqref{EinsteinODEinZDeltaH} by Remark \ref{SPreserved}. Then it is easy to verify that the functions satisfy \eqref{EvolutionMetric}-\eqref{DerivativeTrL}.
\end{proof}

In the following remark, we list the fixed points of the Einstein ODE \eqref{EinsteinODEinZDeltaH} restricted to $\mathcal{S}$.

\begin{remark}[cf. Proposition 2.5 in \cite{EinsteinMetricsOnTheTenSpheres}]
\label{FixedPointsInHDeltaZ}
\normalfont
Restricted to $\mathcal{S}$, the fixed points of the Einstein ODE \eqref{EinsteinODEinZDeltaH} are given in $(Z, \Delta, H)$-coordinates by
\begin{enumerate}
    \item $p_1^+= \left( \frac{d_1-1}{d_1^2}, \frac{1}{d_1},1 \right)$ and $p_1^-= \left( \frac{d_1 -1}{d_1^2}, - \frac{1}{d_1}, -1 \right).$ These correspond to a smooth collapse of $S^{d_1}.$ 
    \item $p_2^+= \left( - \frac{d_2-1}{d_2^2}, - \frac{1}{d_2},1 \right)$ and $p_2^-= \left( - \frac{d_2 -1}{d_2^2}, \frac{1}{d_2}, -1 \right).$ These correspond to a smooth collapse of $S^{d_2}.$
    \item $\cone^{\pm}= \left( 0,0, \pm 1 \right).$ These correspond to the base points of the cone solution, cf. Remark \ref{SpecialSolutions}(b).
    \item $q_1^{\pm}= \left( 0, \mp \sqrt{\frac{n-1}{d_1 d_2}}, \pm 1 \right), q_2^{\pm}= \left( 0, \pm \sqrt{\frac{n-1}{d_1 d_2}}, \pm 1 \right).$ These correspond to singular solutions.   
\end{enumerate}
In particular, solutions emanating from $p^+_1$ correspond to solutions satisfying (\ref{InitialSmoothCollapseSDOneFcoords}), and solutions converging to $p^-_1$ correspond to solutions satisfying (\ref{TerminalSmoothCollapseSDOneFcoords}).
\end{remark}

We also compute the linearizations of two important critical points.

\begin{proposition}[cf. Proposition 2.6 in \cite{EinsteinMetricsOnTheTenSpheres}]
\label{LinearizationOfFixedPoints}
All of the fixed points of the Einstein ODE \eqref{EinsteinODEinZDeltaH} are hyperbolic. Furthermore:
\begin{enumerate}
        \item $p^+_1$ is a saddle. The linearization at $p^+_1$ has the eiganvalues $\frac{2}{d_1}$ of multiplicity 2 and and $-\frac{d_1-1}{d_1}$ with multiplicity 1. The eigenspace of $\frac{2}{d_1}$ is spanned by $(1+\frac{d_1-d_2}{d_1 n}, 1, 0 )$ and $(\frac{d_1-1}{d_1^2}, 0, 1)$. The unstable manifold of $p_1^+$ is $2$-dimensional and intersects $\partial \mathcal{S}$ transversally near $p_1^+$.
        \item $\cone^{+}$ is a saddle point of the ODE \eqref{EinsteinODEinZDeltaH}. When $n=9$, the linearization at $\cone^{+}$ has the eiganvalues $\frac{2}{9}$ with mutiplicity 1 and $-\frac{4}{9}$ with algebraic multiplicity $2$ and geometric multiplicity $1$. The eigenspace of $\frac{2}{9}$ is spanned by $(0,0,1)$ and the eigenspace of $-\frac{4}{9}$ is spanned by $(4,9,0)$.  Furthermore, $\cone^+$ is a sink for the 2-dimensional ODE obtained by restricting \eqref{EinsteinODEinZDeltaH} to the invariant set $\mathcal{S}\cap \{H=1\}$. 
    \end{enumerate}
\end{proposition}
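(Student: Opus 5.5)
The plan is to compute the Jacobian of the vector field in \eqref{EinsteinODEinZDeltaH} at each fixed point listed in Remark \ref{FixedPointsInHDeltaZ}. We then read off the eigenvalues and check that none has zero real part, which gives hyperbolicity. We write $F=(F_Z,F_\Delta,F_H)$ for the right-hand side. The $H$-equation does not involve $Z$, so $\partial_Z F_H=0$. Moreover $\partial_\Delta F_H=-\frac{2}{n}(1-H^2)d_1d_2\Delta$ and $\partial_H F_H=\frac{2H}{n}(d_1d_2\Delta^2+1)$. At every fixed point $H=\pm1$, so $1-H^2=0$ and the third row of the Jacobian reduces to $(0,0,\frac{2H}{n}(d_1d_2\Delta^2+1))$. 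Hence $\frac{2H}{n}(d_1d_2\Delta^2+1)\neq 0$ is always an eigenvalue, and the remaining two eigenvalues are those of the upper-left $2\times 2$ block in $(Z,\Delta)$, evaluated at the fixed point with $H=\pm1$. This reduces everything to $2\times 2$ computations.

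For the block we use $\partial_Z F_\Delta=1$ and $\partial_\Delta F_\Delta=H\bigl(\frac{3d_1d_2}{n}\Delta^2-\frac{n-1}{n}\bigr)$. The remaining entries, $\partial_Z F_Z=\frac{2}{n}\Delta(d_1d_2\Delta H+d_1-d_2)$ and $\partial_\Delta F_Z$, follow from the product rule. We go through the cases one at a time. At $\cone^\pm$ we have $\Delta=0$, which gives the block $\begin{pmatrix}0&-\frac{2(n-1)}{n^2}\\ 1&\mp\frac{n-1}{n}\end{pmatrix}$. Its characteristic polynomial is $\mu^2\pm\frac{n-1}{n}\mu+\frac{2(n-1)}{n^2}$. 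For $n=9$ at $\cone^+$ this is $\mu^2+\frac89\mu+\frac{16}{81}=(\mu+\frac49)^2$, a double root. One checks that the block is not diagonal, so the geometric multiplicity is $1$, and the kernel of $J+\frac49$ is spanned by $(4,9)$. The third eigenvalue is $\frac{2}{9}$ with eigenvector $(0,0,1)$: because the third row is $(0,0,\frac29)$, we need a vector $v=(a,b,1)$ with $(J-\frac29)v=0$. The upper block gives $a,b$, and they vanish because $\partial_H F_Z=\partial_H F_\Delta=0$ at $\Delta=0$.

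Since the plane $\{H=1\}$ is invariant (Remark \ref{SPreserved}), the restricted system has the same $2\times2$ block, whose eigenvalues are both $-\frac49<0$, so $\cone^+$ is a sink there. For general $n$ the discriminant $\frac{(n-1)^2}{n^2}-\frac{8(n-1)}{n^2}$ can be negative, but the real part $\mp\frac{n-1}{2n}$ is still nonzero, and the determinant is positive. So hyperbolicity holds in every dimension.

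At $p_1^+$ we substitute $Z=\frac{d_1-1}{d_1^2}$, $\Delta=\frac1{d_1}$, $H=1$. The $H$-eigenvalue is $\frac2n(\frac{d_2}{d_1}+1)=\frac2{d_1}$. In the $(Z,\Delta)$-block we expect eigenvalues $\frac2{d_1}$ and $-\frac{d_1-1}{d_1}$; we confirm this by checking that the trace equals $\frac{2}{d_1}-\frac{d_1-1}{d_1}$ and that the determinant is the product of the two. Next we verify directly that the vectors $(1+\frac{d_1-d_2}{d_1n},1,0)$ and $(\frac{d_1-1}{d_1^2},0,1)$ are in the kernel of $J-\frac{2}{d_1}$. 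This gives a $2$-dimensional unstable eigenspace. The unstable manifold theorem then makes the unstable manifold $2$-dimensional. For transversality to $\partial\mathcal S$, we note that $p_1^+$ lies on the boundary surface $\{-d_2Z+\frac{d_1d_2}{n}\Delta^2=\frac{n-1}{n}\}$ and also on $\{H=1\}$. We compute the gradient of the defining function at $p_1^+$ and check that it is not orthogonal to the unstable eigenspace; this yields a transversal intersection near the point. The points $p_1^-,p_2^\pm,q_i^\pm$ are handled in the same way by substitution. The maps $(Z,\Delta,H,s)\mapsto(Z,-\Delta,-H,-s)$ and the swap $d_1\leftrightarrow d_2$ with $Z\mapsto -Z$, $\Delta\mapsto-\Delta$ reduce the work to $p_1^+$ and $q_1^+$.

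The main obstacle is purely computational. It is the bookkeeping of $\partial_\Delta F_Z$ at $p_1^+$ and $q_i^\pm$, where the expressions involve $d_1,d_2,n$ nontrivially, together with showing that no eigenvalue has zero real part at the $q_i^\pm$. At $q_i^\pm$, the defining relation $\frac{d_1d_2}{n}\Delta^2=\frac{n-1}{n}$ kills the bracket terms. This should make the block triangular-like: $\partial_\Delta F_Z=\frac{2}{n}\Delta\cdot\frac{2d_1d_2}{n}\Delta$ and $\partial_\Delta F_\Delta=H\frac{2(n-1)}{n}$. The resulting eigenvalues are then explicit and visibly nonzero.
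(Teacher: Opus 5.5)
Your route is the paper's ``direct computation'': the Jacobian is block-triangular at every fixed point because $H=\pm1$ there, which leaves $2\times2$ computations plus the two symmetries. Your data at $\cone^{+}$ and $p_1^+$ check out. At $p_1^+$ the $(Z,\Delta)$-block has trace $\frac{3-d_1}{d_1}$ and determinant $-\frac{2(d_1-1)}{d_1^2}$, and both listed vectors lie in $\ker(J-\frac{2}{d_1})$.

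\textbf{Wrong face at $p_1^+$.} The transversality step uses the wrong face of $\mathcal{S}$. At $p_1^+$ one has
\[
-d_2Z+\frac{d_1d_2}{n}\Delta^2=-\frac{d_2((d_1-1)n-d_1)}{nd_1^2}<0,
\]
so $p_1^+$ is not on $\{-d_2Z+\frac{d_1d_2}{n}\Delta^2=\frac{n-1}{n}\}$; that face passes through $p_2^+$. Instead $d_1Z+\frac{d_1d_2}{n}\Delta^2=\frac{d_1-1}{d_1}+\frac{d_2}{nd_1}=\frac{n-1}{n}$. So $p_1^+$ lies on the edge where $\{d_1Z+\frac{d_1d_2}{n}\Delta^2=\frac{n-1}{n}\}$ meets $\{H=1\}$. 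This is consistent with $f_2$ in Proposition \ref{EquivalenceOfODEs} staying positive while $f_1\to0$. A gradient check on a surface that does not pass near $p_1^+$ proves nothing. The correct check is:
\begin{itemize}
\item $(d_1,\frac{2d_2}{n},0)\cdot(1+\frac{d_1-d_2}{d_1n},1,0)=d_1+1\neq0$;
\item $(\frac{d_1-1}{d_1^2},0,1)$ has nonzero $H$-component.
\end{itemize}
Hence $E^u$ is transverse to both faces through $p_1^+$.

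\textbf{The block at $q_i^\pm$ is not triangular.} Your claim that the block becomes ``triangular-like'' with ``visibly nonzero'' eigenvalues is also false. Only the bracket in $F_Z$ vanishes, and you get
\[
\begin{pmatrix} \frac{2}{n}\big((n-1)H+(d_1-d_2)\Delta\big) & \frac{4(n-1)}{n^2}\\ 1 & \frac{2(n-1)}{n}H\end{pmatrix},
\]
with both off-diagonal entries nonzero. Its determinant is $\frac{4(n-1)}{n^2}\big(n-2+(d_1-d_2)H\Delta\big)$. Showing this is nonzero needs
\[
|d_1-d_2|\sqrt{\frac{n-1}{d_1d_2}}<n-2 .
\]
Squaring and using $(d_1-d_2)^2=n^2-4d_1d_2$, this is equivalent to $d_1d_2>n-1$, i.e.\ $(d_1-1)(d_2-1)>0$. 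This is where $d_1,d_2\geq2$ is used: for $d_1=1$ the determinant vanishes at $q_2^\pm$. With that inequality the determinant is positive, and the trace $\frac{2}{n}\big(2(n-1)H+(d_1-d_2)\Delta\big)$ has the sign of $H$. The third eigenvalue $2H$ has the same sign. So $q_i^+$ are sources and $q_i^-$ are sinks, and together with your symmetry reductions this completes the hyperbolicity claim.
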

\begin{proof}
    The above conclusion follows from a direct computation.
\end{proof}

\begin{definition}
    Denote $M_i^+$ (resp. $M_i^-$) as the intersection of the unstable manifold of $p_i^+$ (resp. the stable manifold of $p_i^-$) with $\mathcal{S}$.
\end{definition}

We make a remark on two important solutions in the $(Z,\Delta,H)$-coordinates.
\begin{remark}
\label{SpecialSolutions}
\normalfont
Here are two important solutions.
    \begin{enumerate}
        \item The unique trajectory in $M^+_1\cap\{H=1\}$ corresponds to B\"ohm's Ricci flat metric on $\R^{d_1+1}\times S^{d_2}$ in \cite{BohmNonCompactComhomOneEinstein}. We denote the trajectory by $\gamma^{RF}_1$. It emanates from $p^+_1$ and converges to $\cone^+$ by \cite[Proposition 2.15]{EinsteinMetricsOnTheTenSpheres}.  
        \item The cone solution $\cone^+(h):=(0,0,h)$, where $h(s)= -\tanh(s/n)$, corresponds to the sine-suspension over the principal orbit $S^{d_1}\times S^{d_2}$. It emanates from $\cone^+$ and converges to $\cone^-$.
    \end{enumerate}
\end{remark}

Around the cone solution, the solutions in $\mathcal{S}$ exhibit counterclockwise rotational behavior up to quadrants, discovered by Nienhaus-Wink in \cite{EinsteinMetricsOnTheTenSpheres}.

\begin{proposition}[Proposition 2.13 in \cite{EinsteinMetricsOnTheTenSpheres}]
\label{CounterclockwiseRotationUpToQuadrants}
Let $(Z,\Delta,H)$ be a solution to the Einstein ODE \eqref{EinsteinODEinZDeltaH} restricted to $\mathcal{S}$ that is not the cone solution. If it enters a quadrant in the $(Z,\Delta)$-plane, it either stays in that quadrant or goes into the next quadrant going counterclockwise around the cone solution.

\end{proposition}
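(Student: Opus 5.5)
The approach is to examine the sign of the vector field on the four coordinate half-axes $\{Z=0\}$ and $\{\Delta=0\}$ in the $(Z,\Delta)$-plane, using only the first two equations of \eqref{EinsteinODEinZDeltaH} and the constraints defining $\mathcal{S}$. A trajectory can only leave a quadrant by crossing one of these half-axes. So it suffices to show two things: on each half-axis (away from the cone line $Z=\Delta=0$) the flow crosses strictly counterclockwise, and a non-cone solution cannot slide along an axis. I take quadrants to be closed modulo the origin and orient counterclockwise as $Z\to\Delta$, i.e.\ from the positive $Z$-axis to the positive $\Delta$-axis.

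\emph{Step 1 (the $Z$-axis, $\Delta=0$).} Here $\Delta' = Z$. At a point with $Z>0$, $\Delta$ strictly increases, so the trajectory passes from $\{\Delta<0\}$ into $\{\Delta>0\}$; this is the crossing from quadrant IV to quadrant I. At a point with $Z<0$, $\Delta$ decreases, giving the crossing from quadrant II to quadrant III. Both are counterclockwise.

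\emph{Step 2 (the $\Delta$-axis, $Z=0$).} Here
\[
Z' = \tfrac{2}{n}\Delta\left(\tfrac{d_1d_2}{n}\Delta^2-\tfrac{n-1}{n}\right).
\]
In $\mathcal{S}$ with $Z=0$ the constraints give $\tfrac{d_1d_2}{n}\Delta^2\le \tfrac{n-1}{n}$, so the bracket is $\le 0$. It vanishes exactly at $\Delta^2=\tfrac{n-1}{d_1d_2}$, which are the boundary points where the fixed points $q_i^\pm$ and their invariant boundary curves lie.
\begin{itemize}
\item For $0<\Delta^2<\tfrac{n-1}{d_1d_2}$ we get $\operatorname{sign} Z' = -\operatorname{sign}\Delta$. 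On the positive $\Delta$-axis $Z$ decreases, which is the crossing from quadrant I to quadrant II. On the negative $\Delta$-axis $Z$ increases, which is the crossing from quadrant III to quadrant IV.
\item At the degenerate points $\Delta^2=\tfrac{n-1}{d_1d_2}$, I use that the boundary curves of $\mathcal{S}$ are invariant (Remark \ref{SPreserved}). A solution on such a curve stays on it. In the interior, the degenerate point is not reached. If it is reached on the boundary curve, then $Z\equiv 0$ forces the fixed point itself, so the orbit is constant and no quadrant change occurs.
\end{itemize}

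\emph{Step 3 (no tangential sliding).} It remains to exclude a non-cone solution staying on an axis for an interval. If $\Delta\equiv 0$ on an interval, then $Z=\Delta'\equiv 0$, so the solution is the cone line $\{Z=\Delta=0\}$; by uniqueness it is the cone solution or one of the fixed points $\cone^\pm$. Both are excluded, or are trivially confined to a single point. If $Z\equiv 0$ with $\Delta\ne 0$, then $Z'\equiv 0$ forces $\Delta^2=\tfrac{n-1}{d_1d_2}$, which is the fixed-point case already handled. Isolated touching points, where the solution meets an axis without crossing, occur only where the relevant derivative vanishes, i.e.\ at the origin or at the degenerate points above.

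\emph{Conclusion.} Combining Steps 1--3, every transversal exit from a quadrant goes into the next quadrant counterclockwise, which proves the statement. The main obstacle is the careful treatment of the degenerate boundary points $\Delta^2=\tfrac{n-1}{d_1d_2}$ with $Z=0$, and of solutions in $\{H=\pm1\}$. I would handle both through the invariance of the boundary curves of $\mathcal{S}$ and uniqueness of ODE solutions.
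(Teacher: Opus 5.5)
Your proposal is correct and follows the same argument as the cited Proposition 2.13 of Nienhaus--Wink; the paper itself gives no proof. The flow crosses each half-axis transversally in the counterclockwise direction because $\Delta'=Z$ on $\{\Delta=0\}$ and $Z'=\frac{2}{n}\Delta\bigl(\frac{d_1d_2}{n}\Delta^2-\frac{n-1}{n}\bigr)$ on $\{Z=0\}$, and invariance of the cone line rules out passing through the origin. One small correction: for $|H|<1$ the points $(0,\pm\sqrt{(n-1)/(d_1d_2)},H)$ are not fixed points but lie on the invariant singular lines $q_i(h)$ (the intersection of the two invariant boundary pieces of $\mathcal{S}$), where $(Z,\Delta)$ is constant, so your conclusion in that case still holds.
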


\subsection{A counting lemma for $SO(d_1+1)\times SO(d_2+1)$-invariant Einstein metrics on $S^{d_1+1}\times S^{d_2}$}
\label{SymmetryAndCounting}
\begin{lemma}
Consider the map $\sigma:\mathcal{S}\rightarrow\mathcal{S}$ defined by $\sigma(Z,\Delta,H):=(Z,-\Delta,-H)$. Then $(Z(s),\Delta(s),H(s))$ solves the Einstein ODE \eqref{EinsteinODEinZDeltaH} restricted to $\mathcal{S}$ if and only if $\sigma(Z(-s),\Delta(-s),H(-s))$ solves the Einstein ODE \eqref{EinsteinODEinZDeltaH} restricted to $\mathcal{S}$. In particular, $\sigma(M^+_1)=M^-_1$.
\end{lemma}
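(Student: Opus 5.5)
The plan is to verify directly that the right-hand side of \eqref{EinsteinODEinZDeltaH} is compatible with the involution combined with time reversal. Write the system as $X' = F(X)$ with $X=(Z,\Delta,H)$ and $F=(F_Z,F_\Delta,F_H)$. Given a solution $X(s)$, set $\tilde X(s):=\sigma(X(-s))$, that is,
\[
\tilde Z(s)=Z(-s),\qquad \tilde\Delta(s)=-\Delta(-s),\qquad \tilde H(s)=-H(-s).
\]
Then $\tilde X'(s)=-D\sigma\,F(X(-s))$. Since $\sigma$ is linear and an involution, $\tilde X$ is a solution exactly when $-\sigma F(X)=F(\sigma X)$ for all $X$. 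Componentwise this reads
\[
F_Z(Z,-\Delta,-H)=-F_Z,\qquad F_\Delta(Z,-\Delta,-H)=F_\Delta,\qquad F_H(Z,-\Delta,-H)=-F_H.
\]

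These identities will be checked term by term, which I expect to be routine.

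\begin{itemize}
\item In $F_Z$, the prefactor $\Delta$ is odd. In the bracket, $Z\Delta H$ is invariant, $\Delta^2$ is invariant, and the constant and $Z$ terms are invariant, so $F_Z$ is odd.
\item In $F_\Delta$, the product $\Delta H$ is invariant, $\Delta^2$ is invariant, and $Z$ is invariant, so $F_\Delta$ is even.
\item In $F_H$, the factors $1-H^2$ and $d_1d_2\Delta^2+1$ are invariant, so $F_H$ keeps its sign under $\sigma$. We need $F_H(\sigma X)=-F_H(X)$, and at first glance this appears to fail. However, $H$ is negated and time is reversed, so $\tilde H'(s)=H'(-s)=F_H(X(-s))$. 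What must hold is $\tilde H'=F_H(\tilde X)=F_H(X(-s))$, and this is true.
\end{itemize}

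So the correct bookkeeping is to compute $\tilde X'$ component by component rather than to trust the sign pattern of the shorthand above. Doing this gives:

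\begin{itemize}
\item $\tilde Z'(s)=-Z'(-s)=-F_Z(X(-s))$, which should equal $F_Z(\sigma X(-s))$. This holds since $F_Z$ is odd under $\sigma$.
\item $\tilde\Delta'(s)=\Delta'(-s)=F_\Delta(X(-s))$, which should equal $F_\Delta(\sigma X(-s))$. This holds since $F_\Delta$ is even.
\item $\tilde H'(s)=H'(-s)=F_H(X(-s))$, which holds since $F_H$ is even.
\end{itemize}

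The correct condition is therefore $F\circ\sigma=-\sigma F$ on the $Z$ component and $F\circ\sigma=\sigma\circ(-F)$ overall, and I will present it in this per-component form to avoid sign confusion.

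For the converse, note that $\sigma$ is an involution and $s\mapsto -s$ is an involution, so the map $X(\cdot)\mapsto\sigma X(-\cdot)$ is its own inverse and the "if" direction follows from the "only if" direction. We also need $\sigma(\mathcal S)=\mathcal S$. This is immediate because the defining inequalities of $\mathcal S$ involve only $Z$, $\Delta^2$ and $H^2$.

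For the final statement, $\sigma(p_1^+)=\left(\frac{d_1-1}{d_1^2},-\frac1{d_1},-1\right)=p_1^-$, using Remark \ref{FixedPointsInHDeltaZ}. If $X(s)\to p_1^+$ as $s\to-\infty$, then $\sigma X(-s)\to p_1^-$ as $s\to+\infty$. Hence $\sigma$ maps trajectories in $M_1^+$ into $M_1^-$, and applying the same argument to $\sigma^{-1}=\sigma$ gives the reverse inclusion, so $\sigma(M_1^+)=M_1^-$. Here $M_1^\pm$ are understood as unions of such trajectories in $\mathcal S$, with the fixed points included. The only real obstacle is keeping the sign bookkeeping straight, which is why I will do it component by component.
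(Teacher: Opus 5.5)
Your proposal is correct and follows the same route as the paper. The paper says only that the symmetry follows directly from \eqref{EinsteinODEinZDeltaH}, and that $\sigma(M_1^+)=M_1^-$ follows from $\sigma(p_1^+)=p_1^-$ and the definitions of stable/unstable manifolds; you carry out that sign check explicitly. In the write-up, state the condition once as $F\circ\sigma=-\sigma\circ F$, i.e.\ $F_Z$ odd and $F_\Delta$, $F_H$ even under $\sigma$, because your first componentwise version wrongly required $F_H\circ\sigma=-F_H$ before you corrected it.
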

\begin{proof}
    The $\sigma$-symmetry follows directly from the Einstein ODE \eqref{EinsteinODEinZDeltaH}. From a geometric point of view, one can also interpret the $\sigma$-symmetry as the freedom to choose the $t$-direction from one singular orbit to the other. The conclusion $\sigma(M^+_1)=M^-_1$ follows immediately from the definitions of stable/unstable manifolds and the fact that $\sigma(p^+_1)=p^-_1$.
\end{proof}

\begin{lemma}
\label{CountingLemma}
Let $d_1, d_2\ge 2$ be natural numbers. Modulo scaling and with respect to the standard action, $SO(d_1+1)\times SO(d_2+1)$-invariant Einstein metrics on $S^{d_1 +1}\times S^{d_2}$ are in one-to-one correspondence with points in $M_1^+ \cap \sigma(M^+_1) \cap \{H=0\}$. In particular, points in $M^+_1\cap\{H=0\}\cap\{\Delta=0\}$ induce $SO(d_1+1)\times SO(d_2+1)$-invariant Einstein metrics on $S^{d_1 +1}\times S^{d_2}$. 
\end{lemma}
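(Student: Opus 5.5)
We will prove both directions of the correspondence, and then derive the ``in particular'' statement from the symmetry $\sigma$.

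\emph{From metrics to points.} Let $g$ be an $SO(d_1+1)\times SO(d_2+1)$-invariant Einstein metric on $S^{d_1+1}\times S^{d_2}$. After scaling we may assume its Einstein constant is the fixed $\lambda>0$; positivity follows from Bonnet--Myers or from compactness of the orbit space.
\begin{enumerate}
\item The orbit space is an interval $[0,T]$. At each endpoint one sphere collapses, and topology forces this to be $S^{d_1}$ at both ends. Indeed, collapsing $S^{d_2}$ at an end would produce a different manifold: a $D^{d_2+1}$-bundle end glued to a $D^{d_1+1}$-bundle end gives $S^{d_1+d_2+1}$, and a $D^{d_2+1}$ end at both sides gives $S^{d_1}\times S^{d_2+1}$.
\item Hence $(f_1,f_2)$ satisfies \eqref{InitialSmoothCollapseSDOneFcoords} and \eqref{TerminalSmoothCollapseSDOneFcoords}.
\item On $(0,T)$ the quantities $\tr(L)^2+n\lambda>0$ and $f_i>0$, so the $(Z,\Delta,H)$-coordinates are defined. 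The constraint \eqref{GeneralConstraintEquation} places the trajectory in the interior of $\mathcal{S}$ with $H\in(-1,1)$.
\item By Remark \ref{FixedPointsInHDeltaZ}, the trajectory emanates from $p_1^+$ and converges to $p_1^-$. It therefore lies in $M_1^+\cap M_1^-=M_1^+\cap\sigma(M_1^+)$.
\item By Proposition \ref{MonotonicityOfH}, $H$ decreases strictly from $1$ to $-1$, so the trajectory meets $\{H=0\}$ in exactly one point. This defines the map from metrics to points.
\end{enumerate}

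\emph{From points to metrics, and injectivity.} Take a point in $M_1^+\cap\sigma(M_1^+)\cap\{H=0\}$.
\begin{enumerate}
\item Its trajectory runs from $p_1^+$ to $p_1^-$.
\item It is not contained in $\partial\mathcal{S}$, because the boundary pieces are invariant (Remark \ref{SPreserved}) and contain no such connecting orbit through $H=0$. Concretely, on those boundary pieces $f_1$ or $f_2$ would be infinite, corresponding to no actual metric. So the strict inequalities of Proposition \ref{EquivalenceOfODEs} hold.
\item Proposition \ref{EquivalenceOfODEs} then gives a solution of \eqref{EvolutionMetric}--\eqref{DerivativeTrL} for $t\in(0,T)$.
\item Convergence to $p_1^\pm$ translates back into the boundary conditions \eqref{InitialSmoothCollapseSDOneFcoords} and \eqref{TerminalSmoothCollapseSDOneFcoords}. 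By B\"ohm's regularity statement quoted above, the metric extends smoothly over both singular orbits, giving a smooth Einstein metric on $S^{d_1+1}\times S^{d_2}$.
\item Finiteness of $T$ follows from the exponential approach to the hyperbolic fixed points. Near them $\sqrt{1-H^2}$ decays exponentially in $s$, so $\int\sqrt{(1-H^2)/(n\lambda)}\,ds<\infty$.
\item The two constructions are mutually inverse: the coordinate change is invertible on the interior by Proposition \ref{EquivalenceOfODEs}, and the only freedom, translation in $t$ or $s$, is removed by the normalization $H=0$.
\item The remaining discrete freedom is reversing the direction of $t$. This corresponds to $\sigma$, which preserves $\{H=0\}$. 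So the correspondence is stated for metrics modulo this reparametrization and the group action, as the lemma intends.
\end{enumerate}

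\emph{The ``in particular'' statement.} Let $x\in M_1^+\cap\{H=0,\Delta=0\}$. Then $\sigma(x)=x$, so $x\in\sigma(M_1^+)$, and the first part applies.

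The main obstacle is the boundary behavior: showing that asymptotic convergence to $p_1^\pm$ in the $(Z,\Delta,H)$ system corresponds exactly to smooth collapse at finite $t$, and excluding trajectories lying in $\partial\mathcal{S}$. For this I would rely on the identification in Remark \ref{FixedPointsInHDeltaZ} and the hyperbolicity in Proposition \ref{LinearizationOfFixedPoints}, citing \cite{EinsteinMetricsOnTheTenSpheres} for the analogous argument.
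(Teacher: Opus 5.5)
Your skeleton matches the paper's proof. A metric gives an orbit from $p_1^+$ to $p_1^-$ in $M_1^+\cap M_1^-=M_1^+\cap\sigma(M_1^+)$. Proposition \ref{MonotonicityOfH} identifies the orbit by its unique point on $\{H=0\}$. Finally, $\sigma$ fixes $\{H=\Delta=0\}$ pointwise, which gives the inclusion. Where you go beyond the paper to make the bijection precise, two steps do not hold up.

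\textbf{Boundary orbits.} Your exclusion argument is circular. That $f_1$ or $f_2$ ``would be infinite'' only shows a boundary orbit would not come from a metric. The bijection needs that no point of $M_1^+\cap\sigma(M_1^+)\cap\{H=0\}$ lies on $\partial\mathcal{S}$. This is a real issue: $p_1^{\pm}$ lie on the piece $\{d_1Z+\frac{d_1d_2}{n}\Delta^2=\frac{n-1}{n}\}$, and $M_1^+$ contains an orbit in that piece with $|H|<1$ (the $\tau=-1$ curve of Section \ref{SectionProofOfMainTheorem}). So you must show that orbit does not end at $p_1^-$. One way:
\begin{enumerate}
\item On that piece, $\Delta'=\bigl(\frac{n-1}{n}-\frac{d_1d_2}{n}\Delta^2\bigr)\bigl(\frac{1}{d_1}-\Delta H\bigr)$, and the first factor is $\geq 0$ inside $\mathcal{S}$.
\item Set $u:=\frac{1}{d_1}-\Delta H$. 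The orbit leaves $p_1^+$ with $u>0$ (check with the tangent vector at $\tau=-1$).
\item Before a first zero of $u$ we have $\Delta'\geq 0$, so $\Delta\geq\frac{1}{d_1}$ there. At that zero $\Delta'=0$, hence $u'=-\Delta H'>0$, which is impossible for a first zero.
\item So $\Delta\geq\frac{1}{d_1}$ along the whole orbit, and it never reaches $p_1^-$.
\end{enumerate}
The other boundary pieces are harmless: $\{-d_2Z+\frac{d_1d_2}{n}\Delta^2=\frac{n-1}{n}\}$ does not contain $p_1^{\pm}$, and orbits through $H=0$ avoid $\{H=\pm1\}$. The paper's proof passes over this point silently.

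\textbf{Reversing $t$.} Your step here is wrong: $\sigma$ preserves $\{H=0\}$ as a set, not pointwise. If you identify a metric with its reversal, a non-symmetric metric produces two distinct points $x\neq\sigma(x)$ of the $\sigma$-invariant set $M_1^+\cap\sigma(M_1^+)\cap\{H=0\}$. The correspondence then becomes two-to-one. The paper identifies metrics with solutions satisfying \eqref{InitialSmoothCollapseSDOneFcoords} and \eqref{TerminalSmoothCollapseSDOneFcoords}, which implicitly fixes which singular orbit sits at $t=0$. Then $g$ and its pullback under the pole-swapping reflection are counted separately, and no residual freedom remains. If you do quotient by reversal, metrics correspond to points modulo $\sigma$.

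\textbf{Smaller points.}
\begin{enumerate}
\item Bonnet--Myers does not give $\lambda>0$. The constraint \eqref{GeneralConstraintEquation} does: if $\lambda\leq 0$, then $(\operatorname{tr}L)^2=\operatorname{tr}(L^2)+\operatorname{tr}(r)-(n-1)\lambda>0$ on $(0,T)$. But $\operatorname{tr}L$ runs from $+\infty$ to $-\infty$, so it must vanish somewhere, a contradiction.
\item Your topological argument for which sphere collapses is unnecessary. The standard action already fixes the singular orbits as $\{\pm e\}\times S^{d_2}$, where $\pm e$ are the poles of $S^{d_1+1}$. The argument also fails for $d_1=d_2$, where $S^{d_1}\times S^{d_2+1}\cong S^{d_1+1}\times S^{d_2}$.
\end{enumerate}
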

\begin{proof}
    $SO(d_1+1)\times SO(d_2+1)$-invariant Einstein metrics on $S^{d_1 +1}\times S^{d_2}$ are exactly the solutions of the Einstein equations \eqref{EvolutionMetric}-\eqref{DerivativeTrL} that satisfy \eqref{InitialSmoothCollapseSDOneFcoords} and \eqref{TerminalSmoothCollapseSDOneFcoords}. By Proposition \ref{EquivalenceOfODEs} and Remark \ref{FixedPointsInHDeltaZ}, a solution of \eqref{EvolutionMetric}-\eqref{DerivativeTrL} with conditions \eqref{InitialSmoothCollapseSDOneFcoords} and \eqref{TerminalSmoothCollapseSDOneFcoords} corresponds uniquely to a trajectory in the $(Z,\Delta,H)$-subsystem emanating from $p^+_1$ and converging to $p^-_1$. Such trajectories form the set $M^+_1\cap M^-_1$ by the definition of stable and unstable manifolds.

    So, now it suffices to count the number of trajectories in the set $M^+_1\cap M^-_1$. By Proposition \ref{MonotonicityOfH}, any trajectory in $M^+_1\cap M^-_1$ is uniquely determined by its intersection point with $\{H=0\}$. Then it suffices to count the cardinality of $M^+_1\cap M^-_1\cap\{H=0\}$. Finally, note that by Lemma 2.9, we have $M^+_1\cap M^-_1\cap\{H=0\}=M^+_1\cap \sigma(M^+_1)\cap \{H=0\}\supseteq M^+_1\cap\{H=0\}\cap\{\Delta=0\} $.
\end{proof}

\begin{remark}
    \label{SymmetricSolutions}
    \normalfont 
    Points in $M^+_1\cap\{H=0\}\cap\{\Delta=0\}$ induce metrics that are symmetric in the sense of B\"ohm \cite[Section 3]{BohmInhomEinstein}, as $H(s_0)=\Delta(s_0)=0$ iff $\dot{f}_1(t(s_0))=\dot{f}_2(t(s_0))=0$.
\end{remark}

The following remark ensures that any newly found metric in our setting must be non-standard. Recall that a metric $g$ on a product manifold is called non-standard if it does not arise as a product metric.
    
\begin{remark}
   \label{ProductOfRoundOrNonstandard}
   \normalfont
   Any $SO(3)\times SO(8)$-invariant Einstein metric on $S^{3}\times S^{7}$ is either a product of the scaled round metrics or a non-standard Einstein metric. To see this, we consider any $SO(3)\times SO(8)$-invariant Einstein metric $g=dt^2+f_1^2(t)g_{S^2}+f^2_2(t)g_{S^7}$ on $S^{3}\times S^{7}$ that also arises as a product metric, and show that it must be a product of the scaled round metrics. Since $g$ is a product metric that is Einstein, its factors are also Einstein. Note that any Einstein metric on $S^3$ must be the scaled round metric as the Weyl curvature tensor vanishes in dimension $3$. So we must have $f_1(t)=c\sin(c^{-1}t)$ for some $c>0$. Plugging $f_1(t)=c\sin(c^{-1}t)$ into the Einstein ODEs \eqref{DerivativeL}-\eqref{DerivativeTrL} immediately implies that $f_2$ must be constant. Therefore $g$ is a product of the scaled round metrics.
\end{remark}

\section{The converging angle of $\gamma^{RF}_1$}
\label{SectionProofOfconvergingangle}

Recall that $\gamma^{RF}_1$ denotes the unique trajectory in $M^+_1\cap\{H=1\}$, i.e. B\"ohm's Ricci flat metric on $\R^{d_1+1}\times S^{d_2}$\cite{BohmNonCompactComhomOneEinstein}. In this section, we prove the most important fact in this paper: when $(d_1,d_2)=(2,7)$, $\gamma^{RF}_1$ converges to $\cone^+$ at the angle $\arctan{\frac{9}{4}}+\pi$ in the $\{H=1\}$ plane.

Throughout this section, we fix $(d_1,d_2)=(2,7)$. When $(d_1,d_2)=(2,7)$, the ODE in the invariant subspace $\{H=1\}$ is given by
\begin{align}
    \label{ODEinH=1}
    Z^{'}&=\frac{2}{9}\Delta\bigg(14Z\Delta+\frac{14}{9}\Delta^2-\frac{8}{9}-5Z\bigg)\\
    \nonumber
    \Delta^{'}&=\Delta\bigg(\frac{14}{9}\Delta^2-\frac{8}{9}\bigg)+Z.
\end{align}

First, we construct a forwardly preserved set $\mathcal{A}$ in the subsystem $\mathcal{S}\cap\{H=1\}$, cf. Figure \ref{fig:fig3}.
\begin{figure}[h]
    \centering
    \includegraphics[scale=0.18]{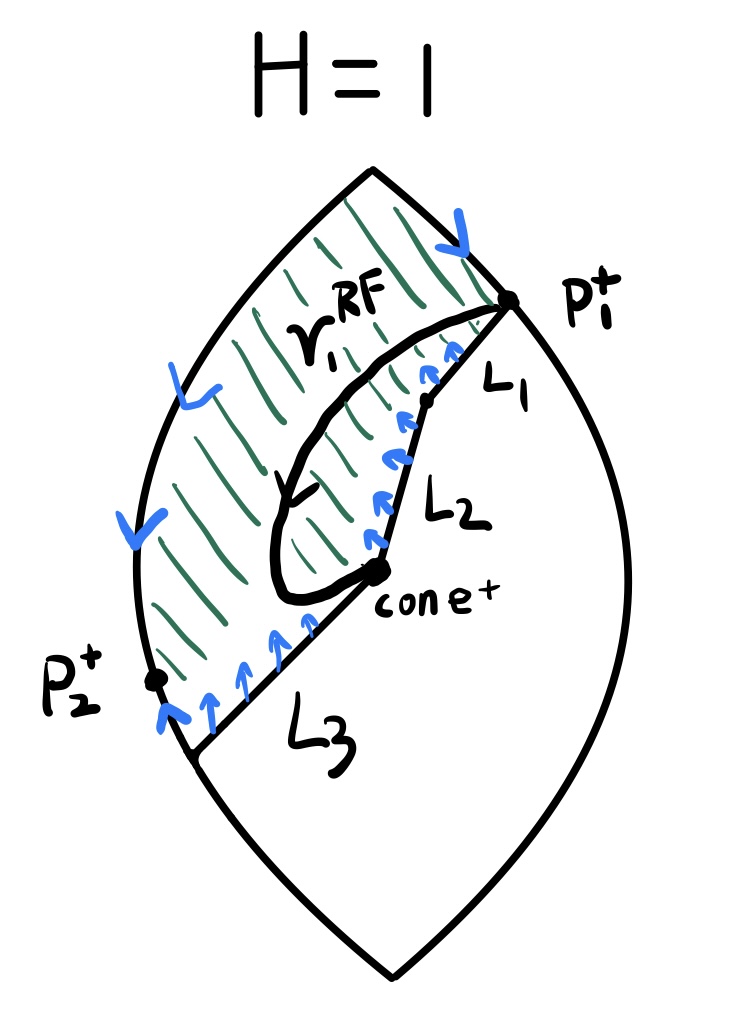}
    \caption{The invariant set $\mathcal{A}$: $\mathcal{A}$ corresponds to the dashed region; the blue arrows along the boundary of $\mathcal{A}$ describe the direction of the ODE vector field; $L_1$ is the boundary part with $\Delta-\frac{9}{5}(Z-\frac{1}{4})-\frac{1}{2}= 0$; $L_2$ is the boundary part with $\Delta-\frac{23}{10}Z= 0$; $L_3$ is the boundary part with $\Delta-\frac{9}{4}Z= 0$.}
    \label{fig:fig3}
\end{figure}

\begin{lemma}
\label{BarrierLemma}
Fix $(d_1,d_2)=(2,7)$. The set $\mathcal{A}:=\mathcal{S}\cap\{H=1\}\cap\bigg(\{\Delta-\frac{9}{5}(Z-\frac{1}{4})-\frac{1}{2}\geq 0,\frac{1}{10}\leq Z\leq\frac{1}{4}\}\cup\{\Delta- \frac{23}{10}Z\geq 0,0\leq Z\leq \frac{1}{10}\}\cup\{\Delta- \frac{9}{4}Z\geq 0,Z\leq 0\}\bigg)$ is forwardly preserved by the Einstein ODE in $\{H=1\}$ \eqref{ODEinH=1}, i.e. if a trajectory enters $\mathcal{A}$ at $s_0$ it will not leave $\mathcal{A}$ for any $s>s_0$.
\end{lemma}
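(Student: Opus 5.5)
The plan is to show that along every boundary piece of $\mathcal{A}$ which is not already contained in an invariant set, the vector field of \eqref{ODEinH=1} points weakly into $\mathcal{A}$. A trajectory then cannot exit.

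\textbf{Identifying the boundary.} The lower boundary of $\mathcal{A}$ is the polygonal curve $L_3\cup L_2\cup L_1$, and it is continuous.
\begin{itemize}
\item At $Z=0$, both $L_3$ and $L_2$ give $\Delta=0$, which is the fixed point $\cone^+$.
\item At $Z=\frac{1}{10}$, both $L_2$ and $L_1$ give $\Delta=\frac{23}{100}$.
\item At $Z=\frac14$, $L_1$ gives $\Delta=\frac12$. This is the fixed point $p_1^+=(\frac14,\frac12)$, which lies on $\partial\mathcal{S}$ since $2\cdot\frac14+\frac{14}{9}\cdot\frac14=\frac89$.
\item For $Z<0$, $L_3$ runs until it meets the curve $\{-7Z+\frac{14}{9}\Delta^2=\frac89\}$.
\end{itemize}
All remaining boundary of $\mathcal{A}$ lies in $\partial\mathcal{S}\cap\{H=1\}$. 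By Remark \ref{SPreserved} these arcs are invariant, so trajectories cannot cross them. It therefore suffices to treat the three segments.

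\textbf{Sign of the derivative on each segment.} On each segment write $\ell=\Delta-kZ-c$, so that $\mathcal{A}$ is locally $\{\ell\ge 0\}$. I substitute $\Delta=kZ+c$ into $\ell'=\Delta'-kZ'$ from \eqref{ODEinH=1} and obtain a cubic polynomial in $Z$. The goal is to show it is positive on the relevant $Z$-interval, except at the fixed points $\cone^+$ and $p_1^+$, which are harmless.

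For $L_3$ ($k=\frac94$, $c=0$, $Z\le0$):
\begin{itemize}
\item The linear terms cancel, because $(4,9)$ is the eigenvector of $-\frac49$ at $\cone^+$ (Proposition \ref{LinearizationOfFixedPoints}).
\item The quadratic coefficient is $\frac12\cdot5\cdot\frac94=\frac{45}{8}>0$.
\item The cubic coefficient is $\frac{14}{9}\big(\frac94\big)^3-7\big(\frac94\big)^2-\frac79\big(\frac94\big)^3=-\frac{1701}{64}$, which contributes positively for $Z<0$.
\end{itemize}
Hence $\ell'=\frac{45}{8}Z^2-\frac{1701}{64}Z^3>0$ for $Z<0$.

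For $L_2$ ($k=\frac{23}{10}$, $c=0$, $0\le Z\le\frac1{10}$), the linear term no longer cancels. I will compute it and check that $\ell'=Z\,q(Z)$ with $q$ a quadratic that is positive on $(0,\frac1{10}]$, for instance by checking $q$ at the endpoints together with the sign of its leading coefficient.

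For $L_1$ ($k=\frac95$, $c=\frac1{20}$, $\frac1{10}\le Z\le\frac14$), the cubic vanishes at $Z=\frac14$ because $p_1^+$ is a fixed point. I factor out $(\frac14-Z)$ and verify that the remaining quadratic is positive on $[\frac1{10},\frac14]$.

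\textbf{Corners.} At $Z=\frac1{10}$ the region is locally the union of the two half-planes, since the slope drops from $\frac{23}{10}$ to $\frac95$. So strict positivity of either $\ell'$ at the corner suffices, and both are positive by the computations above. At the fixed points $\cone^+$ and $p_1^+$ there is nothing to check.

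\textbf{Conclusion.} Since each segment's $\ell'$ is strictly positive away from fixed points, a trajectory at a boundary point moves into $\{\ell>0\}$, so it cannot leave $\mathcal{A}$. Making this rigorous is a standard first-exit-time argument.

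\textbf{Main obstacle.} I expect the difficulty to be at $\cone^+$, where $L_2$ and $L_3$ meet tangentially to the degenerate eigendirection. There the positivity is only quadratic, not linear, on the $L_3$ side, and on the $L_2$ side it relies on the slope $\frac{23}{10}>\frac94$, which gives a small linear term. The exact arithmetic on $L_2$ and $L_1$ needs care, and I would verify those quadratics explicitly.
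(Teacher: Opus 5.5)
Your proposal is correct and is essentially the paper's proof. You reduce to the three linear boundary segments, since the rest of $\partial\mathcal{A}$ lies in the invariant set $\partial\mathcal{S}\cap\{H=1\}$, and show $\ell'>0$ on each away from the fixed points $\cone^+$ and $p_1^+$; the paper does exactly this via the cubic on $L_1$ and the factorization $\ell'=Z\cdot(\text{quadratic})$ on $L_2$, $L_3$ in Lemmas A.1--A.3. Your deferred checks do go through, e.g.\ on $L_1$ one has $36\cdot 10^{6}\cdot 5\ell'=(\tfrac14-Z)(2286144000Z^2+23328000Z-19116000)$ with the quadratic factor positive on $[\tfrac1{10},\tfrac14]$, and your corner analysis at $Z=\tfrac1{10}$ and first-exit argument only make explicit what the paper leaves implicit.
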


\begin{proof}
    Since $\mathcal{S}\cap\{H=1\}$ is invariant under the ODE, it suffices to show that trajectories cannot exit through any of the three boundary parts $\{\Delta-\frac{9}{5}(Z-\frac{1}{4})-\frac{1}{2}= 0,\frac{1}{10}\leq Z\leq\frac{1}{4}\}$, $\{\Delta- \frac{23}{10}Z=0,0\leq Z\leq \frac{1}{10}\}$, or $\{\Delta- \frac{9}{4}Z=0,Z\leq 0\}$. 

    First we look at the segment $\{\Delta-\frac{9}{5}(Z-\frac{1}{4})-\frac{1}{2}= 0,\frac{1}{10}\leq Z\leq\frac{1}{4}\}$. Note that
    \begin{align*}
        &5\bigg(\Delta-\frac{9}{5}(Z-\frac{1}{4})-\frac{1}{2}\bigg)'\bigg|_{ \Delta-\frac{9}{5}(Z-\frac{1}{4})-\frac{1}{2}=0}\\
        =&5\Delta'-9Z'|_{ \Delta-\frac{9}{5}(Z-\frac{1}{4})-\frac{1}{2}=0}\\
        =&5\bigg(\frac{9}{5}Z+\frac{1}{20}\bigg)\bigg[\frac{14}{9}\bigg(\frac{9}{5}Z+\frac{1}{20}\bigg)^2-\frac{8}{9}\bigg]+5Z\\
        &-2\bigg(\frac{9}{5}Z+\frac{1}{20}\bigg)\bigg[14Z\bigg(\frac{9}{5}Z+\frac{1}{20}\bigg)+\frac{14}{9}\bigg(\frac{9}{5}Z+\frac{1}{20}\bigg)^2-\frac{8}{9}-5Z\bigg]\\
        =&-\frac{7938}{125}Z^3+\frac{3807}{250}Z^2+\frac{693}{1000}Z-\frac{531}{4000}.
    \end{align*}
In Lemma \ref{LemmaA.1} in the Appendix, we prove that the above polynomial is positive for for $Z\in [\frac{1}{10},\frac{1}{4}),$ and when $Z=\frac{1}{4}$ on the segment we are at the fixed point $p^+_1$. Therefore we see that trajectories cannot exit our set through the segment $\{\Delta-\frac{9}{5}(Z-\frac{1}{4})-\frac{1}{2}= 0,\frac{1}{10}\leq Z\leq\frac{1}{4}\}$.

Similarly for the other two boundary parts, we have 
\begin{align*}
    &(\Delta-kZ)'|_{\Delta-kZ=0}\\
    =&-\frac{2k^2}{9}Z\bigg(14kZ^2+\frac{14k^2}{9}Z^2-\frac{8}{9}-5Z\bigg)+kZ\bigg(\frac{14k^2}{9}Z^2-\frac{8}{9}\bigg)+Z\\
    =&Z\bigg(-\frac{28k^4+126k^3}{81}Z^2+\frac{10k^2}{9}Z+\frac{16k^2-72k+81}{81}\bigg).
\end{align*}
By Lemma \ref{LemmaA.2} in the Appendix, we see that when $k=\frac{23}{10}$ the above polynomial is positive for $Z\in(0,\frac{1}{10}]$, and in addition when $Z=0$ on the segment we are at the fixed point $\cone^+$. Therefore we see that trajectories cannot exit our set through the segment $\{\Delta-\frac{23}{10}Z=0, 0\leq Z\leq\frac{1}{10}\}$. Similarly, by Lemma \ref{LemmaA.3} in the Appendix, we see that when $k=\frac{9}{4}$ the above polynomial is positive for $Z<0$, so we see that trajectories also cannot exit our set through $\{\Delta-\frac{9}{4}Z=0, Z\leq0\}$.

Therefore we have proven that trajectories in our set cannot exit through any part of its boundary, so the set $\mathcal{A}$ is forwardly preserved by the ODE \eqref{ODEinH=1}.
\end{proof}

Now we prove that $\gamma^{RF}_1$ converges to $(0,0,1)$ at the angle of $\arctan\frac{9}{4}+\pi$ in the $\{H=1\}$ plane.

\begin{lemma}
\label{ConvergingAngleofRF}
Fix $(d_1,d_2)=(2,7)$. Let $\varphi^{RF}_1$ be the unique continuous function such that $\gamma^{RF}_1(s)=|\gamma^{RF}_1(s)|\begin{pmatrix}\cos\varphi_1^{RF}(s)\\ \sin \varphi^{RF}_1(s)\end{pmatrix}$ and $\lim\limits_{s\rightarrow-\infty}\varphi^{RF}_1(s)=\arctan2$. Namely, $\varphi^{RF}_1$ is the angle function on $\gamma^{RF}_1$. Then we have the converging angle $\lim\limits_{s\rightarrow\infty}\varphi^{RF}_1(s)=\arctan\frac{9}{4}+\pi$.
\end{lemma}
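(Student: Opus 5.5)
Our plan combines three ingredients: the local structure of the flow at $\cone^+$ inside $\{H=1\}$ (Proposition \ref{LinearizationOfFixedPoints}(b)), the forwardly preserved set $\mathcal{A}$ of Lemma \ref{BarrierLemma}, and the quadrant-rotation property of Proposition \ref{CounterclockwiseRotationUpToQuadrants}.

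\textbf{Local structure at $\cone^+$.} Restricted to $\{H=1\}$, the linearization at $\cone^+$ has the single eigenvalue $-\frac{4}{9}$, with algebraic multiplicity $2$ and a one-dimensional eigenspace spanned by $(4,9)$. Hence $\cone^+$ is a degenerate (Jordan block) stable node. Every trajectory converging to it, other than the constant one, approaches tangentially to the eigenline $\Delta=\frac{9}{4}Z$. So its angle converges to $\arctan\frac94$ or to $\arctan\frac94+\pi$, modulo $2\pi$.

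To justify this, we write the linear part in a Jordan basis: $u=(Z,\Delta)$ with $u'=Au+O(|u|^2)$ and $A=-\frac49 I+N$, where $N$ is nilpotent. The standard Hartman-type argument for nodes with a nontrivial Jordan block then gives convergence of the direction $u/|u|$ to $\pm(4,9)/|(4,9)|$. Alternatively, we can use the angle ODE. Writing $\varphi$ for the polar angle in the $(Z,\Delta)$-plane, the linear part gives $\varphi'=-\langle u^\perp,Au\rangle/|u|^2+O(|u|)$. This has only the zeros $\tan\varphi=\frac94$ and is sign-definite elsewhere, which forces $\varphi$ to converge to a zero.

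Since $\gamma^{RF}_1$ converges to $\cone^+$ by Remark \ref{SpecialSolutions}(a), $\lim_{s\to\infty}\varphi^{RF}_1(s)$ exists and equals $\arctan\frac94+k\pi$ for some integer $k$.

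\textbf{Entering $\mathcal{A}$ and determining $k$.} We first check that $\gamma^{RF}_1$ enters $\mathcal{A}$. The trajectory leaves $p_1^+=(\frac14,\frac12,1)$, which lies on the boundary segment $L_1$, in the direction of the unstable eigenvector $(1+\frac{d_1-d_2}{d_1 n},1,0)=(\frac{13}{18},1,0)$ (or its negative) within $\{H=1\}$. Its slope is $\frac{18}{13}<\frac95$, while $L_1$ has slope $\frac95$. Going in the direction of decreasing $Z$ into $\mathcal{S}$, this direction therefore lies strictly above the line $L_1$. We must also verify that the inward direction into $\mathcal{S}$ is the one with decreasing $Z$ and $\Delta$. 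Granting this, $\gamma^{RF}_1(s)\in\mathcal{A}$ for all $s$ sufficiently negative, and by Lemma \ref{BarrierLemma} it stays in $\mathcal{A}$ for all larger $s$.

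Near $\cone^+$, the set $\mathcal{A}$ lies above the lines $\Delta=\frac{23}{10}Z$ for $Z\ge0$ and $\Delta=\frac94Z$ for $Z\le0$. In small balls around the origin, this region consists of the angles in $[\arctan\frac94+\pi-\epsilon',\ \pi+\arctan\frac94]$ together with $[\arctan\frac{23}{10},\pi]$, modulo $2\pi$. It excludes a neighbourhood of the angle $\arctan\frac94$ itself, since $\frac{23}{10}>\frac94$ means that the ray at angle $\arctan\frac94$ with $Z>0$ lies below $L_2$. Thus the limit angle is $\arctan\frac94+\pi$ modulo $2\pi$.

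To pin down the lift, we note that $\varphi^{RF}_1$ starts at $\arctan2$ and that the trajectory stays in $\mathcal{A}$. The set $\mathcal{A}\setminus\{\cone^+\}$ lies in the half-plane above the line $\Delta=\frac94 Z$, locally. More simply, $\mathcal{A}$ contains no point on the ray of angle $\arctan\frac94$ near the origin, and no point with $\Delta<0,Z>0$ at all. Hence the continuous angle can never cross the ray $\{\varphi\equiv\arctan\frac94 \bmod 2\pi,\ Z>0\}$. Since it starts in $(\arctan\frac94,\arctan\frac94+2\pi)$ and must converge to an angle $\equiv\arctan\frac94+\pi$ modulo $2\pi$, it stays in that interval and the limit is exactly $\arctan\frac94+\pi$. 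Proposition \ref{CounterclockwiseRotationUpToQuadrants} is consistent with this, as the trajectory moves from the first quadrant into the second and then the third.

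\textbf{Main obstacle.} We expect the hardest step to be the rigorous tangential-convergence statement at the degenerate node $\cone^+$, that is, excluding spiraling and non-convergence of the angle. We would handle it via the angle ODE near the origin: compute $\varphi'$ from \eqref{ODEinH=1} to leading order and use its sign-definiteness away from $\tan\varphi=\frac94$. A secondary point is verifying the entrance into $\mathcal{A}$ at $p_1^+$, namely the orientation of $M_1^+\cap\{H=1\}$ relative to $L_1$.
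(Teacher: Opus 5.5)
Up to the last step, your route is the same as the paper's. Tangential convergence at the degenerate node $\cone^+$ gives a limit $\arctan\frac{9}{4}+k\pi$. The expansion at $p_1^+$ puts $\gamma^{RF}_1$ into $\mathcal{A}$. The line $\Delta=\frac{23}{10}Z$ excludes the angle $\arctan\frac{9}{4}$ near the origin.

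\textbf{The gap: pinning down the lift.} You claim that $\varphi^{RF}_1$ starts in $(\arctan\frac{9}{4},\arctan\frac{9}{4}+2\pi)$ and never crosses the ray $\{\Delta=\frac{9}{4}Z,\ Z>0\}$. Both claims are false.
\begin{itemize}
\item $\arctan 2<\arctan\frac{9}{4}$. Equivalently, $p_1^+=(\frac{1}{4},\frac{1}{2})$ lies below the line $\Delta=\frac{9}{4}Z$, since $\frac{1}{2}<\frac{9}{16}$.
\item By the intermediate value theorem, the lift must take the value $\arctan\frac{9}{4}$ on its way to $\arctan\frac{9}{4}+\pi$. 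So $\gamma^{RF}_1$ does cross that ray.
\item Nothing in $\mathcal{A}$ forbids this crossing. $\mathcal{A}$ misses the ray only in a small ball around $\cone^+$. Away from the origin it contains points of the ray, e.g.\ $(\frac{1}{9},\frac{1}{4})$, which lies on both $L_1$ and $\Delta=\frac{9}{4}Z$.
\end{itemize}
So ``no points of the ray near the origin'' does not imply ``the angle never crosses the ray''.

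\textbf{The repair.} It uses a fact you state but do not exploit: $\mathcal{A}\setminus\{\cone^+\}$ is disjoint from the closed fourth quadrant $\{Z\ge 0,\ \Delta\le 0\}$. On the $L_1$-piece $\Delta\ge\frac{23}{100}$, on the $L_2$-piece $\Delta\ge\frac{23}{10}Z$, and $Z=0$ forces $\Delta\ge 0$.

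Since $\gamma^{RF}_1(s)\ne\cone^+$ for all $s$, and the lift tends to $\arctan 2\in(0,\frac{3}{2}\pi)$ as $s\to-\infty$, continuity confines $\varphi^{RF}_1$ to $(0,\frac{3}{2}\pi)$ for all $s$. Hence $k\in\{0,1\}$, and your near-origin argument removes $k=0$.

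This is essentially what the paper does. It rules out $k\ge 2$ because the trajectory would have to enter the fourth quadrant, and $k<0$ by Proposition \ref{CounterclockwiseRotationUpToQuadrants}.

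\textbf{Minor points.}
\begin{itemize}
\item Near the origin, $\mathcal{A}$ corresponds to the angular range $[\arctan\frac{23}{10},\pi+\arctan\frac{9}{4}]$, not the union you wrote.
\item The orientation at $p_1^+$ that you ``grant'' is immediate. The gradient of $2Z+\frac{14}{9}\Delta^2$ at $p_1^+$ is $(2,\frac{14}{9})$, and $(2,\frac{14}{9})\cdot(-\frac{13}{18},-1)=-3<0$. So $(-\frac{13}{18},-1)$ points into $\mathcal{S}$.
\end{itemize}
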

\begin{proof}
In $\{H=1\}$ the stable eigenspace of $\cone^+$ is spanned by the vector $(4,9)$, so by \cite[Chapter 15]{CoddingtonLevinsonODEs} we must have $\lim_{s\rightarrow\infty}\varphi^{RF}(s)=\arctan\frac{9}{4}+m\pi$ for some $m\in \mathbb{Z}$. The strategy here is to use the forwardly preserved set $\mathcal{A}$ constructed in Lemma \ref{BarrierLemma} to rule out all the cases except for $m=1$.

First, we want to show that $\gamma^{RF}_1$ must enter $\mathcal{A}$. Recall that $\gamma^{RF}_1$ emanates from $p^+_1$, so by the linearization at $p^+_1$ and the unstable version of Theorem 4.5 in \cite[Chapter 13]{CoddingtonLevinsonODEs}, there is some $\varepsilon>0$ such that
\begin{align*}
    (Z(\gamma^{RF}_1(s)),\Delta(\gamma^{RF}_1(s)))=\bigg(\frac{1}{4},\frac{1}{2}\bigg)+e^{s}\bigg(-\frac{13}{18},-1\bigg)+O\bigg(e^{(1+\varepsilon)s}\bigg).
\end{align*}
Therefore for all $s<0$ with sufficiently large absolute value, we have
\begin{align*}
    &\Delta(\gamma^{RF}_1(s))-\frac{9}{5}\bigg(Z(\gamma^{RF}_1(s))-\frac{1}{4}\bigg)-\frac{1}{2}\\
    =&\frac{1}{2}-e^s-\frac{9}{5}\bigg(\frac{1}{4}-\frac{13}{18}e^s-\frac{1}{4}\bigg)-\frac{1}{2}+O\bigg(e^{(1+\varepsilon)s}\bigg)\\
    =&\frac{3}{10}e^s+O\bigg(e^{(1+\varepsilon)s}\bigg)\\
    >&0,
\end{align*}
and consequently $\gamma^{RF}_1(s)\in \mathcal{A}$. So by Lemma \ref{BarrierLemma}, we see that $\gamma^{RF}_1(\R)\subseteq\mathcal{A}$.

Now, we rule out the approaching angle $\arctan\frac{9}{4}$. Assume for contradiction that $\lim_{s\rightarrow\infty}\varphi^{RF}_1(s)=\arctan\frac{9}{4}$. Since $\arctan{\frac{9}{4}}$ is an angle in the first quadrant that is smaller than $\arctan{\frac{23}{10}}$, for sufficiently large $s>0$ we have
\begin{align*}
    \ Z(\gamma^{RF}_1(s))>0 \ , \ \Delta(\gamma^{RF}_1(s))>0 \ , \ \varphi^{RF}_1(s)<\arctan\frac{23}{10} \ , 
\end{align*}
and therefore
\begin{align*}
    \Delta(\gamma^{RF}_1(s)) < \frac{23}{10}Z(\gamma^{RF}_1(s)))
\end{align*}
i.e. $\gamma^{RF}_1(s)\notin\mathcal{A}$. This directly contradicts the fact $\gamma^{RF}_1(\R)\subseteq\mathcal{A}$. So we cannot have $\lim_{s\rightarrow\infty}\varphi^{RF}_1(s)=\arctan\frac{9}{4}$.

Next, we rule out approaching angles $\arctan\frac{9}{4}+l\pi$ for any integer $l\geq 2$. Assume for contradiction that $\lim_{s\rightarrow\infty}\varphi^{RF}_1(s)=\arctan\frac{9}{4}+l\pi$ for some $l\geq 2$. Then $\lim_{s\rightarrow\infty}\varphi^{RF}_1(s)>\frac{3}{2}\pi$ and by intermediate value theorem we see that $\gamma^{RF}_1$ must have entered the fourth quadrant $\{Z> 0,\Delta<0\}$ at some time. However, note that this contradicts the fact that $\gamma^{RF}_1(\mathbb{R})\subseteq\mathcal{A}$ and $\mathcal{A}\cap\{Z> 0,\Delta<0\}=\varnothing$ (one can easily verify that the intersection is indeed empty). Therefore, we cannot have $\lim_{s\rightarrow\infty}\varphi^{RF}_1(s)=\arctan\frac{9}{4}+l\pi$ for any $l\geq 2$.

Finally, since $\lim\limits_{s\rightarrow-\infty}\varphi^{RF}_1(s)=\arctan2$, the approaching angles $\arctan\frac{9}{4}+l\pi$ for integers $l<0$ are immediately ruled out by the counterclockwise rotation behavior up to quadrants (Proposition \ref{CounterclockwiseRotationUpToQuadrants}). Therefore, we have ruled out all the approaching angles except for $\arctan{\frac{9}{4}}+\pi$, and we must have $\lim_{s\rightarrow\infty}\varphi^{RF}_1(s)=\arctan\frac{9}{4}+\pi$.
\end{proof}
\section{The angle ODE along the cone solution}
\label{SectionODEestimates}

Similar to section 4 in \cite{EinsteinMetricsOnTheTenSpheres}, to estimate the winding along the cone solution $\cone^+(h)=(0,0,h)$, we look at the linearized angle ODE along it and establish our version of angle barrier function. Throughout this section, we assume $n=9$. The linearization along the cone solution is given by 
\begin{align}
\label{LinearizationAlongConeSolution}
D(ODE)_{\cone^+(h)} =\begin{pmatrix}
0 & - \frac{2(n-1)}{n^2} & 0 \\
1 & - \frac{n-1}{n} h & 0 \\
0 & 0 & \frac{2}{n} h
    \end{pmatrix}.
\end{align}
If
\begin{align*}
X(h) = |X(h)| \begin{pmatrix} \cos{\varphi(h)} \\ \sin{\varphi(h)}\end{pmatrix}
\end{align*}
solves \eqref{LinearizationAlongConeSolution},
\begin{align*}
\frac{d}{ds}X(h(s)) = \begin{pmatrix}
0 & - \frac{2(n-1)}{n^2} \\
1 & - \frac{n-1}{n} h(s)
    \end{pmatrix} X(h(s)),
\end{align*}
then $\varphi$ solves the associated angle ODE
\begin{align}
    \label{AngleODEeq}
    \frac{d}{ds}\varphi(h(s)) = & \ \cos^2(\varphi) + \frac{2(n-1)}{n^2}\sin^2(\varphi) - \frac{n-1}{n}h(s)\sin(\varphi)\cos(\varphi)
\end{align}
where $h(s)=-\tanh(s/n)$. \vspace{2mm}

We consider a perturbed version of $\eqref{AngleODEeq}$ and construct a barrier solution $\varphi_{bar}.$

\begin{lemma}
\label{PBDLemma}
Let $n=9$. There exist $\varepsilon_{bar}>0$ and $\delta_0>0$ such that for all $\delta \in [0, \delta_0)$ the solution $\varphi_{bar}^{\delta} \colon \R \to \R$ of the initial value problem
\begin{align}
\label{DeltaAngleODE}
    \frac{d}{ds}\varphi(h(s)) & = \cos^2(\varphi) + \frac{2(n-1)}{n^2}\sin^2(\varphi) - \frac{n-1}{n}h(s)\sin(\varphi)\cos(\varphi) - \delta, \\
    \nonumber
    \varphi_{bar}^{\delta}(0) & =\frac{5}{2}\pi + \delta,
\end{align}
where $h(s)=-\tanh(s/n),$ satisfies
\begin{align*}
\varphi_{bar}^{\delta}(h(s)) < \arctan(\frac{9}{4})+\pi - \varepsilon_{bar}
\end{align*}
for all $s \in \R$ with $h(s)>0.995$. 
\end{lemma}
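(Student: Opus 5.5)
The claim only concerns $s$ with $h(s)>0.995$, that is $s<-n\,\mathrm{artanh}(0.995)\approx -26.5$. So we integrate \eqref{DeltaAngleODE} \emph{backwards} from $s=0$ and must show that by the time $h$ reaches $0.995$ the angle has already dropped below $\arctan\frac94+\pi-\varepsilon_{bar}$. The backward dynamics keep it below that level afterwards.

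The first step is to understand the right-hand side
\[
F_\delta(\varphi,h)=\cos^2\varphi+\tfrac{16}{81}\sin^2\varphi-\tfrac{8}{9}h\sin\varphi\cos\varphi-\delta .
\]
For $\delta=0$ it is a quadratic form in $(\cos\varphi,\sin\varphi)$ with discriminant $\frac{64}{81}h^2-\frac{64}{81}$. Hence it is positive for $|h|<1$, and at $h=1$ it degenerates to $(\cos\varphi-\frac49\sin\varphi)^2$. This vanishes exactly at $\tan\varphi=\frac94$, matching the eigendirection $(4,9)$ of $\cone^+$ from Proposition \ref{LinearizationOfFixedPoints}. Consequently $\varphi$ is increasing in $s$ for $\delta$ small, uniformly on compact $h$-ranges. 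Backwards in time it decreases from $\frac52\pi$ toward the near-equilibrium angles $\arctan\frac94+m\pi$ as $h\to1$.

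Step 1 (nondegenerate region). On $s\in[s_1,0]$ with $s_1=-n\,\mathrm{artanh}(0.995)$, we have $h\in[0,0.995]$. There $F_0\ge c>0$ uniformly; explicitly the minimum eigenvalue of the form is positive since $1-h^2\ge 0.0099$. We must show that integrating backwards over this finite interval lowers $\varphi$ from $\frac52\pi$ to a value strictly below $\arctan\frac94+\pi$. The rigorous route is comparison with explicit piecewise sub- and supersolutions. On each subinterval $[s_{j+1},s_j]$ one freezes $h$ at the endpoint that makes $F$ smaller or larger, as needed, and uses that $\varphi$ moves monotonically. The frozen equation $\varphi'=F(\varphi,h_j)$ is separable and integrable in closed form:
\[
\int \frac{d\varphi}{a\cos^2\varphi+b\sin^2\varphi-c\sin\varphi\cos\varphi}
\]
is an arctan of a linear function of $\tan\varphi$. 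This gives certified lower bounds on the backward decrease of $\varphi$. Alternatively one can do a validated-numerics (interval arithmetic) integration, in the style of the appendix lemmas. The expected value of $\varphi^0_{bar}(s_1)$ is less than $\arctan\frac94+\pi$ by a definite margin $2\varepsilon_{bar}$.

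Step 2 (near $h=1$, $s<s_1$). We need $\varphi$ to stay below $\arctan\frac94+\pi-\varepsilon_{bar}$ for all $s<s_1$. Since $F_\delta\ge -\delta$ everywhere and $F_0\ge0$ for $h\le1$, one might fear an increase backward in time only when $F<0$. Backward in $s$ we have $\frac{d\varphi}{d(-s)}=-F_\delta\le\delta$. That bound alone is not enough on an infinite interval, so we argue instead with a barrier. Take the level $\psi=\arctan\frac94+\pi-\varepsilon_{bar}$. It is a fixed angle away from the degenerate direction, so $F_0(\psi,h)\ge c_\psi>0$ for all $h\in[0.995,1]$. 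Choosing $\delta_0<c_\psi$ gives $F_\delta(\psi,h)>0$. Then a trajectory below $\psi$ at $s_1$ cannot reach $\psi$ at an earlier time, because at a first crossing (going backward) it would need $\varphi'\le0$ there. Hence $\varphi<\psi$ for all $s\le s_1$, which covers every $s$ with $h(s)>0.995$.

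Step 3 (uniformity in $\delta$). The solution depends continuously on $\delta$, both through the ODE and through the initial value $\frac52\pi+\delta$, uniformly on the compact interval $[s_1,0]$. So the strict inequality at $s_1$ persists for $\delta\in[0,\delta_0)$ after shrinking $\delta_0$. With $\delta\ge0$ the extra $-\delta$ term only helps in Step 1, since it lowers $F$ and hence raises $\varphi$ backward in time. This requires the margin from Step 1 to absorb an $O(\delta)$ change, which the continuity argument handles. The main obstacle is Step 1: obtaining a certified, computer-free-or-verifiable quantitative bound showing that the backward winding over $h\in[0,0.995]$ exceeds $\frac32\pi-\arctan\frac94$ with room to spare. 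I would attempt it first with a modest number of frozen-coefficient comparison intervals, refining near $h\approx0.995$ where the rotation slows.
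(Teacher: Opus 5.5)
Your Steps 2 and 3 are sound. Since $\psi=\arctan\frac94+\pi-\varepsilon_{bar}$ is not congruent to $\arctan\frac94$ modulo $\pi$, $F_0(\psi,h)$ has a positive lower bound on $h\in[0.995,1]$. So once $\delta_0<c_\psi$, the first-crossing argument keeps $\varphi$ below $\psi$ for all $s\le s_1$. Continuous dependence on $\delta$ over the compact interval $[s_1,0]$ then transfers a strict inequality at $s_1$ from $\delta=0$ to small $\delta$.

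Two small corrections. First, $s_1=-9\,\mathrm{artanh}(0.995)\approx-26.95$. Second, the $\delta$-terms do \emph{not} help. Lowering $F$ makes $\varphi$ decrease \emph{less} as you integrate backward, and the initial value $\frac52\pi+\delta$ is higher. By comparison, $\varphi^\delta_{bar}\ge\varphi^0_{bar}$ for $s\le 0$, and only the continuity argument absorbs this.

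The real problem is Step 1. It carries the entire quantitative content of the lemma: the backward winding over $h\in[0,0.995]$ must exceed $\frac32\pi-\arctan\frac94$ by a definite margin. You have not established this. You outline a frozen-coefficient comparison scheme or interval arithmetic and say what the value is ``expected'' to be, but no estimate is carried out. As written, the proposal reduces the lemma to an unverified numerical claim of the same strength as the lemma itself.

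The missing idea, which is exactly the paper's proof, is that this computation has already been done. The right-hand side of the angle ODE depends on $\varphi$ only through $\cos^2\varphi$, $\sin^2\varphi$ and $\sin\varphi\cos\varphi$, so it is $\pi$-periodic in $\varphi$: $\varphi$ is a solution if and only if $\varphi+\pi$ is. By uniqueness, $\varphi^\delta_{bar}-\pi$ is the solution with initial value $\frac32\pi+\delta$. For that solution, Nienhaus--Wink \cite[Lemma 4.3]{EinsteinMetricsOnTheTenSpheres} proved $\varphi<\arctan\frac94-\varepsilon_{bar}$ whenever $h(s)>0.995$, for all $\delta\in[0,\delta_0)$. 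Adding $\pi$ gives the statement with the same $\varepsilon_{bar}$ and $\delta_0$.

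Your Steps 1--3 amount to re-proving their lemma shifted by $\pi$. That route could be completed with a certified computation, but the one-line symmetry reduction makes it unnecessary.
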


\begin{proof}
In \cite[Lemma 4.3]{EinsteinMetricsOnTheTenSpheres}, it was proven that if $\varphi_{bar}^{\delta}(h(0))  =\frac{3}{2}\pi + \delta$ then $\varphi_{bar}^{\delta}(h(s)) < \arctan(\frac{9}{4}) - \varepsilon_{bar}$ for all $s \in \R$ with $h(s)>0.995$. Our conclusion follows immediately from the fact that $\varphi(h(s))$ solves \eqref{DeltaAngleODE} if and only if $\varphi(h(s))+\pi$ solves \eqref{DeltaAngleODE}.
\end{proof}

\section{Proof of the main theorem}
\label{SectionProofOfMainTheorem}
In this section, we prove Theorem \ref{EinsteinMetricsOnProductSphereMainTheorem}. Throughout this section, we assume $d_1=2$ and $d_2=7$.
\begin{proof}
    By Lemma \ref{CountingLemma}, it suffices to show that there are at least two points in $M^+_1\cap\{H=0\}\cap\{\Delta=0\}$, one of them corresponding to the product of the scaled rounds metrics and the other point corresponding to a non-standard Einstein metric on $S^3\times S^7$. Therefore, we want to show that the curve $M^+_1\cap\{H=0\}$ winds such that it crosses the horizontal axis $\{\Delta=0\}$ twice.

    We first give a global parameterization of  $M^+_1\cap\{H^2<1\}$. We define the parameterization $\gamma:(-1,1)\times [-1,0)\rightarrow M^+_1\cap\{H^2<1\}$ as follows: for any $h\in (-1,1)$ and $\tau\in [-1,0)$ , let $\gamma(h,\tau)$ be the unique intersection point of $\{H=h\}$ and the trajectory starting from $p^+_1$ with the initial tangent direction $\tau\cdot(\frac{d_1-1}{d_1^2},0,1)+(-\frac{d_1^2+2d_1-1}{d_1^2+d_1}\tau-1)\cdot(1+\frac{d_1-d_2}{d_1n},1,0)$.  By the monotinicity of $H$ (Proposition \ref{MonotonicityOfH}) and continuous dependence on the initial conditions, we see that the parameterization is well-defined and continuous. When $\tau=-1$, the curve $h\rightarrow\gamma(h,-1)$ is the unique trajectory in the boundary part $M^+_1\cap\partial\mathcal{S}\cap\{H^2<1\}$. When $\tau\nearrow 0$, the curves $h\rightarrow \gamma(h,\tau)$ initially follow the Ricci flat trajectory $\gamma^{RF}_1$ and would then stay close to the cone solution until $H=0$ \cite[Remark 2.18]{EinsteinMetricsOnTheTenSpheres}. Note that  $\tau\mapsto\gamma(0,\tau)$ is a parameterization of the curve $M^+_1\cap\{H=0\}$.

    As we want to study the winding, we define an angle function $\varphi$ on $\gamma$ by declaring $\varphi:(-1,1)\times [-1,0)\rightarrow\R$ to be the unique continuous function such that $\gamma(h,\tau)=|\gamma(h,\tau)|\begin{pmatrix}\cos\varphi(h,\tau)\\ \sin \varphi(h,\tau)\end{pmatrix}$ and $\lim_{h\rightarrow 1}\varphi(h,-1)=\arctan2$. Using the boundary singular solution $q_2^+(h)=(0,\sqrt{\frac{n-1}{d_1d_2}},h)$ \cite[Prop 2.8]{EinsteinMetricsOnTheTenSpheres} as a barrier and the counterclockwise rotation behavior up to quadrants (Proposition \ref{CounterclockwiseRotationUpToQuadrants}), Nienhaus-Wink have shown that $\varphi(0,-1)\in(0,\frac{\pi}{2})$ \cite[Section 6]{EinsteinMetricsOnTheTenSpheres}. Therefore to show that $M^+_1\cap\{H=0\}$ crosses the horizontal axis $\{\Delta=0\}$ at least twice, we just have to prove that $\varphi(0,\tau^*)>2\pi$ for some $\tau^*>-1$ and the conclusion would follow immediately from applying the intermediate value theorem to $\varphi(0,\cdot)$ with intermediate values $\pi$ and $2\pi$.

    The idea is to build a suitable local barrier along the cone solution. In \cite{EinsteinMetricsOnTheTenSpheres}, there is a $c$-parameter family of neighborhoods $I_{c}$ of the cone solution, such that every $I_c$ is forwardly preserved by the Einstein ODE \eqref{EinsteinODEinZDeltaH} until $H\geq 0$ \cite[Remark 2.14 and 2.18]{EinsteinMetricsOnTheTenSpheres}. Furthermore, for every $r>0$ we can find a parameter $c(r)$ such that $I_{c(r)}$ is contained in the $r$-tube along the cone solution. Picking a small $r_0>0$ and arguing the same way as in \cite[Prop. 5.1]{EinsteinMetricsOnTheTenSpheres}, we have the following maximum principle: if $\gamma(h^*,\tau^*)\in I_{c(r_0)}$ and $\varphi(h^*,\tau^*)\geq\varphi^{\delta}_{bar}(h^*)$ for some $(h^*,\tau^*)$, then $\varphi(h,\tau^*)\geq \varphi^{\delta}_{bar}(h)$ for all $0\leq h\leq h^*$, where $\varphi^{\delta}_{bar}$ is the angle barrier function in Lemma \ref{PBDLemma}. In particular we would have $\varphi(0,\tau^*)\geq\varphi^{\delta}_{bar}(0)=\frac{5}{2}\pi+\delta>2\pi$. So it's only left for us to find $(h^*,\tau^*)$ such that $\gamma(h^*,\tau^*)\in I_{c(r_0)}$ and $\varphi(h^*,\tau^*)\geq\varphi^{\delta}_{bar}(h^*)$.

    By Lemma \ref{ConvergingAngleofRF}, we recall that $\lim_{s\rightarrow\infty}\varphi^{RF}(s)=\arctan{\frac{9}{4}}+\pi$. Also recall that $\lim_{s\rightarrow\infty}\gamma^{RF}(s)=\cone^+\in \operatorname{int}(I_{c(r_0)})$. So there must exist $s_0$ such that $\varphi^{RF}(s_0)>\arctan{\frac{9}{4}}+\pi-\frac{\varepsilon_{bar}}{2}$ and $\gamma^{RF}(s_0)\in \operatorname{int}(I_{c(r_0)})$, where $\varepsilon_{bar}>0$ is the $\varepsilon_{bar}$ in Lemma \ref{PBDLemma}. As $M^+_1=\gamma^{RF}_1(\R)\cup (M^+_1\cap\{H^2<1\})$, by continuous dependence of initial conditions we can certainly pick some $p^*\in M^+_1\cap\{H^2<1\}$ such that $p^*$ is arbitrarily close to $\gamma^{RF}(s_0)$. Since $\gamma$ is a global parameterization of $M^+_1\cap\{H^2<1\}$, there exists $(h^*,\tau^*)$ such that $\gamma(h^*,\tau^*)=p^*$. Choosing $p^*$ close enough to $\gamma^{RF}_1(s_0)$, we can ensure that $h^*>0.995$, $\varphi(h^*,\tau^*)>\varphi^{RF}(s_0)-\frac{\varepsilon_{bar}}{2}$ and $\gamma(h^*,\tau^*)\in I_{c(r_0)}$. Finally, note that
    \begin{align*}
        \varphi(h^*,\tau^*)&>\varphi^{RF}(s_0)-\frac{\varepsilon_{bar}}{2}\\
        &>\arctan{\frac{9}{4}}+\pi-\frac{\varepsilon_{bar}}{2}-\frac{\varepsilon_{bar}}{2}\\
        &=\arctan{\frac{9}{4}}+\pi-\varepsilon_{bar}\\
        &>\varphi^{\delta}_{bar}(h^*),
    \end{align*}
    where the last inequality follows from Lemma \ref{PBDLemma}. Therefore, we have found our $(h^*,\tau^*)$.
\end{proof} 

\appendix
\section{Appendix}

In this section, we prove some useful elementary lemmas.

\begin{lemma}
\label{LemmaA.1}
 The polynomial
\begin{align*}
    p(Z)&=-\frac{7938}{125}Z^3+\frac{3807}{250}Z^2+\frac{693}{1000}Z-\frac{531}{4000}
\end{align*}
has three real roots $r_1<r_2<r_3$, where $r_1<0<r_2<\frac{1}{10}<\frac{1}{4}=r_3$. In particular, we have $p(Z)>0$ for $Z\in [\frac{1}{10},\frac{1}{4})$.
\end{lemma}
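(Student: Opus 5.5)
The plan is to clear denominators, show that $Z=\frac14$ is an exact root, factor out the linear factor, and then read off everything from the resulting quadratic.

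\textbf{Step 1: clear denominators.} Multiplying by $4000$ gives
\begin{align*}
q(Z):=4000\,p(Z)=-254016Z^3+60912Z^2+2772Z-531 .
\end{align*}
Since $4000>0$, the polynomials $q$ and $p$ have the same roots and the same signs.

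\textbf{Step 2: locate $r_3=\frac14$ and factor.} Evaluating at $Z=\frac14$ gives $-3969+3807+693-531=0$, so $Z=\frac14$ is a root. This is expected, because at $Z=\frac14$ the segment $L_1$ passes through the fixed point $p_1^+$. We can therefore write $q(Z)=(4Z-1)(aZ^2+bZ+c)$ and match coefficients:
\begin{itemize}
\item the $Z^3$ coefficient gives $4a=-254016$, so $a=-63504$;
\item the constant term gives $c=531$;
\item the $Z^2$ coefficient gives $4b-a=60912$, so $b=-648$.
\end{itemize}
The $Z$ coefficient, $4c-b=2124+648=2772$, then serves as a consistency check. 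Hence
\begin{align*}
q(Z)=-(4Z-1)\bigl(63504Z^2+648Z-531\bigr).
\end{align*}

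\textbf{Step 3: analyse the quadratic.} Let $Q(Z)=63504Z^2+648Z-531$. Its discriminant is positive, and the product of its roots, $-531/63504$, is negative. So $Q$ has one negative root $r_1<0$ and one positive root $r_2>0$. Moreover $Q(0)=-531<0$ and $Q(\frac1{10})=635.04+64.8-531=168.84>0$, so the intermediate value theorem gives $0<r_2<\frac1{10}$. This establishes the ordering $r_1<0<r_2<\frac1{10}<\frac14=r_3$.

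\textbf{Step 4: conclude positivity.} For $Z\in[\frac1{10},\frac14)$ we have $Z>r_2$, so $Q(Z)>0$ because the leading coefficient of $Q$ is positive. We also have $4Z-1<0$ on this interval. Therefore $q(Z)=-(4Z-1)Q(Z)>0$, and hence $p(Z)>0$.

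The only potential obstacle is the exact rational arithmetic in the factorization. The consistency check on the $Z$ coefficient in Step 2 guards against errors there; everything else is sign bookkeeping.
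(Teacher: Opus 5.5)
Your proof is correct, and the arithmetic checks out: $q(\tfrac14)=0$, $a=-63504$, $b=-648$, $c=531$, and $Q(\tfrac1{10})=168.84$.

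The paper takes a slightly different route. It rescales by $36000000$ instead and never factors. It applies the intermediate value theorem directly to the cubic, using its end behaviour together with $q(0)<0$, $q(\tfrac1{10})>0$ and $q(\tfrac14)=0$. Since a cubic has at most three roots, there is no root in $[\tfrac1{10},\tfrac14)$, and positivity there follows from $q(\tfrac1{10})>0$.

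Your factorization $q=-(4Z-1)Q$ costs a polynomial division, but it makes the final step more transparent. The sign on $[\tfrac1{10},\tfrac14)$ becomes a product of two factors of known sign. The negative root also comes for free from the negative product of the roots of $Q$, rather than from the end behaviour of the cubic.
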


\begin{proof}
We first locate the roots. The polynomial $p$ has the same roots as its rescaled version
\begin{align*}
    q(Z):&=36000000\cdot p(z)\\
    &=-2286144000Z^3+548208000Z^2+24948000Z-4779000.
\end{align*}
So it suffices to locate the roots of $q.$ We easily see that $\lim\limits_{Z\rightarrow-\infty}q(Z)=\infty$ and $\lim\limits_{Z\rightarrow\infty}q(Z)=-\infty$, and also we have
\begin{align*}
    q(0)=-4779000<0\ ,\ q(\frac{1}{10})=911736>0\ \text{and} \ q(\frac{1}{4})=0.
\end{align*}
So by intermediate value theorem, we see that $r_1<0<r_2<\frac{1}{10}<\frac{1}{4}=r_3$ and consequently $p(Z)>0$ for $Z\in [\frac{1}{10},\frac{1}{4})$.
\end{proof}

\begin{lemma}
\label{LemmaA.2}
Set $k:=\frac{23}{10}$.  The quadratic polynomial
\begin{align*}
    h(Z)=-\frac{28k^4+126k^3}{81}Z^2+\frac{10k^2}{9}Z+\frac{16k^2-72k+81}{81}
\end{align*}
has real roots $r_1,r_2$ such that $r_1<0<\frac{10^2}{23^2}<r_2$. In particular, we have $h(Z)>0$ for $Z\in [0,\frac{1}{10}]$.
\end{lemma}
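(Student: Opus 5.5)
Since $k=\frac{23}{10}>0$, the leading coefficient $-\frac{28k^4+126k^3}{81}$ of $h$ is strictly negative, so $h$ is a concave quadratic with $h(Z)\to-\infty$ as $Z\to\pm\infty$. The plan is to evaluate $h$ at the two points $Z=0$ and $Z=\frac{10^2}{23^2}=\frac{1}{k^2}$, show both values are positive, and then apply the intermediate value theorem on each side, as in the proof of Lemma \ref{LemmaA.1}.

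\textbf{Step 1: $h(0)>0$.} We have
\[
h(0)=\frac{16k^2-72k+81}{81}=\frac{(4k-9)^2}{81}.
\]
For $k=\frac{23}{10}$ this gives $4k-9=\frac{1}{5}$, so $h(0)=\frac{1}{25\cdot 81}>0$.

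\textbf{Step 2: $h\bigl(\tfrac{1}{k^2}\bigr)>0$.} Substituting $Z=\frac{1}{k^2}$ gives
\[
h\Bigl(\frac{1}{k^2}\Bigr)=-\frac{28+\frac{126}{k}}{81}+\frac{10}{9}+\frac{(4k-9)^2}{81}
=\frac{62-\frac{126}{k}+(4k-9)^2}{81}.
\]
Plugging in $k=\frac{23}{10}$, i.e.\ $\frac{126}{k}=\frac{1260}{23}$, this equals
\[
\frac{1}{81}\Bigl(62-\frac{1260}{23}+\frac{1}{25}\Bigr)=\frac{1}{81}\cdot\frac{35650-31500+23}{575}=\frac{4173}{81\cdot 575}>0.
\]
For the written proof I would clear denominators and state this as an explicit positive rational number, as in Lemma \ref{LemmaA.1}. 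This arithmetic is the only step needing care, and it is routine.

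\textbf{Step 3: locate the roots.} Since $h(0)>0$ and $h\to-\infty$ as $Z\to-\infty$, the intermediate value theorem gives a root $r_1<0$. Since $h\bigl(\frac{1}{k^2}\bigr)>0$ and $h\to-\infty$ as $Z\to+\infty$, it gives a root $r_2>\frac{10^2}{23^2}$. A quadratic has at most two roots, so these are all of them. Because the leading coefficient is negative, $h>0$ exactly on $(r_1,r_2)$. Finally, $\frac{1}{10}<\frac{100}{529}$, so $[0,\frac{1}{10}]\subset(r_1,r_2)$ and hence $h>0$ on $[0,\frac{1}{10}]$.
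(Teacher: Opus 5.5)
Your proposal is correct and follows essentially the same argument as the paper: show $h(0)=\frac{(4k-9)^2}{81}>0$ and $h(k^{-2})>0$, then use concavity and the intermediate value theorem to place one root on each side. The only difference is that you compute $h(k^{-2})=\frac{4173}{81\cdot 575}$ exactly, whereas the paper drops the nonnegative square term and uses $\frac{1260}{23}<55$. You also state explicitly that $\frac{1}{10}<\frac{100}{529}$, which the paper leaves implicit.
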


\begin{proof}
We first locate the roots. Note that $\lim\limits_{Z\rightarrow-\infty}h(Z)=\lim\limits_{Z\rightarrow\infty}h(Z)=-\infty$, also we have
\begin{align*}
    h(0)&=\frac{16k^2-72k+81}{81}=\frac{16}{81}(k-\frac{9}{4})^2=\frac{16}{81}(\frac{23}{10}-\frac{9}{4})^2>0\\
    h(k^{-2})&=-\frac{28+126k^{-1}}{81}+\frac{10}{9}+\frac{16}{81}(k-\frac{9}{4})^2\\
    &\geq-\frac{28+\frac{126\cdot10}{23}}{81}+\frac{10}{9}\\
    &>-\frac{28+55}{81}+\frac{10}{9}\\
    &>0.
\end{align*} 
So by intermediate value theorem, we see that $r_1<0<k^{-2}=\frac{10^2}{23^2}<r_2$ and consequently $h(Z)>0$ for $Z\in [0,\frac{1}{10}]$.
\end{proof}

\begin{lemma}
\label{LemmaA.3}
Set $k:=\frac{9}{4}$. Consider the quadratic polynomial
\begin{align*}
    h(Z)=-\frac{28k^4+126k^3}{81}Z^2+\frac{10k^2}{9}Z+\frac{16k^2-72k+81}{81}.
\end{align*}
We have $h(Z)<0$ for any $Z<0$.
\end{lemma}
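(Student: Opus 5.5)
The plan is to exploit the fact that $k=\frac{9}{4}$ is exactly the double root of the constant term of $h$. The proof then reduces to a sign check on two factors.

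First I rewrite the constant coefficient. As in the proof of Lemma \ref{LemmaA.2}, completing the square gives
\begin{align*}
\frac{16k^2-72k+81}{81}=\frac{16}{81}\Big(k-\frac{9}{4}\Big)^2 .
\end{align*}
This vanishes for $k=\frac{9}{4}$, so the constant term of $h$ drops out and $h$ factors as
\begin{align*}
h(Z)=Z\left(\frac{10k^2}{9}-\frac{28k^4+126k^3}{81}\,Z\right).
\end{align*}
This is consistent with the geometry: $\{\Delta=\frac{9}{4}Z\}$ is the stable eigendirection $(4,9)$ of $\cone^+$ from Proposition \ref{LinearizationOfFixedPoints}, so one expects $h(0)=0$.

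Next I determine the sign of each factor for $Z<0$. Since $k=\frac{9}{4}>0$, both $\frac{10k^2}{9}$ and $\frac{28k^4+126k^3}{81}$ are strictly positive. Hence for $Z<0$ the term $-\frac{28k^4+126k^3}{81}Z$ is positive, and the second factor is a sum of two positive numbers, so it is strictly positive. The first factor $Z$ is strictly negative. Their product $h(Z)$ is therefore strictly negative for every $Z<0$, which is the claim.

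Finally, as a consistency check with how the lemma is used in Lemma \ref{BarrierLemma}: there the relevant quantity is $(\Delta-kZ)'=Z\,h(Z)$, and for $Z<0$ this is a product of two negative numbers, hence positive. That is exactly the sign needed to prevent trajectories from exiting $\mathcal{A}$ through $L_3$.

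The argument has no real obstacle. The only point requiring care is the algebraic identity for the constant term, which is a one-line verification: $16\cdot\frac{81}{16}-72\cdot\frac{9}{4}+81=81-162+81=0$.
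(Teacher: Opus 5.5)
Your proof is correct and is the paper's argument: at $k=\frac{9}{4}$ the constant term $\frac{16}{81}(k-\frac{9}{4})^2$ vanishes, so $h(Z)=Z\bigl(\frac{10k^2}{9}-\frac{28k^4+126k^3}{81}Z\bigr)$. The sign check you give for each factor when $Z<0$ is exactly what the paper leaves as ``the conclusion follows easily.''
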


\begin{proof}
Note that when $k=\frac{9}{4}$ we have
\begin{align*}
    h(Z)=-\frac{28k^4+126k^3}{81}Z^2+\frac{10k^2}{9}Z=Z\bigg(-\frac{28k^4+126k^3}{81}Z+\frac{10k^2}{9}\bigg),
\end{align*}
and the conclusion follows easily.
\end{proof}

\FloatBarrier




\begin{thebibliography}{Flat}

\bibitem[BGK05]{BoyerGalickiKollarEinsteinMetricsOnSpheres}
Charles~P. Boyer, Krzysztof Galicki, and J\'{a}nos Koll\'{a}r, \emph{Einstein metrics on spheres}, Ann. of Math. (2) \textbf{162} (2005), no.~1, 557--580.

\bibitem[BH26]{BHNumericalEinsteinMetricOnTwelveSphere}
Timothy Buttsworth and Liam Hodgkinson, \emph{Computationally assisted proof of a novel {$ O(3)\times O(10)$}-invariant {E}instein metric on {$S^{12}$}}, J. Lond. Math. Soc. (2) \textbf{113} (2026), no.~2, Paper No. e70477, 44.

\bibitem[B{\"o}h98]{BohmInhomEinstein}
Christoph B{\"o}hm, \emph{{Inhomogeneous Einstein metrics on low-dimensional spheres and other low-dimensional spaces}}, Invent. Math. \textbf{134} (1998), no.~1, 145--176.

\bibitem[B{\"{o}}h99]{BohmNonCompactComhomOneEinstein}
Christoph B{\"{o}}hm, \emph{Non-compact cohomogeneity one {E}instein manifolds}, Bull. Soc. Math. France \textbf{127} (1999), no.~1, 135--177.

\bibitem[Chi24]{ChiPositiveEinsteinMetrics}
Hanci Chi, \emph{Positive {E}instein metrics with {$\Bbb {S}^{4m+3}$} as the principal orbit}, Compos. Math. \textbf{160} (2024), no.~5, 1004--1040.

\bibitem[CL55]{CoddingtonLevinsonODEs}
Earl~A. Coddington and Norman Levinson, \emph{Theory of ordinary differential equations}, McGraw-Hill Book Co., Inc., New York-Toronto-London, 1955.

\bibitem[EW00]{EschenburgWangInitialValueProblem}
J.-H. Eschenburg and McKenzie~Y. Wang, \emph{The initial value problem for cohomogeneity one {E}instein metrics}, J. Geom. Anal. \textbf{10} (2000), no.~1, 109--137.

\bibitem[FH17]{FoscoloHaskinsNearlyKaehler}
Lorenzo Foscolo and Mark Haskins, \emph{New {$G_2$}-holonomy cones and exotic nearly {K}\"{a}hler structures on {$S^6$} and {$S^3\times S^3$}}, Ann. of Math. (2) \textbf{185} (2017), no.~1, 59--130.

\bibitem[GK07]{GhigiKollarKaehlerEinsteinOrbifolds}
Alessandro Ghigi and J\'{a}nos Koll\'{a}r, \emph{K\"{a}hler-{E}instein metrics on orbifolds and {E}instein metrics on spheres}, Comment. Math. Helv. \textbf{82} (2007), no.~4, 877--902.

\bibitem[Jen73]{JensenEinsteinMetricsOnPrincipalFibreBundles}
Gary~R. Jensen, \emph{Einstein metrics on principal fibre bundles}, J. Differential Geometry \textbf{8} (1973), 599--614.

\bibitem[LST25]{LiuSanoTasinIninitelySasakiEinsteinMetricsSpheres}
Yuchen Liu, Taro Sano, and Luca Tasin, \emph{Infinitely many families of {S}asaki-{E}instein metrics on spheres}, J. Differential Geom. \textbf{130} (2025), no.~1, 1--26.

\bibitem[NW25]{EinsteinMetricsOnTheTenSpheres}
Jan Nienhaus and Matthias Wink, \emph{Einstein metrics on the Ten-Sphere}, to appear in J. Eur. Math. Soc., arXiv:2303.04832 (2025).

\bibitem[Wan26]{QiuShiWangComputerAssistedEinsteinMetrics}
Qiu~Shi Wang, \emph{Doubly warped product Einstein metrics on spheres}, arXiv:2606.05519 (2026).
  
\end{thebibliography}

\end{document}